 \let\mathscr\relax
\DeclareMathOperator{\vol}{Vol}
\DeclareMathOperator{\Spec}{Spec}
\newtheorem{thm}{Theorem}[section]
\newtheorem{cor}[thm]{Corollary}
\newtheorem{prop}[thm]{Proposition}
\newtheorem{lem}[thm]{Lemma}
\newtheorem{claim}[thm]{Claim}
\theoremstyle{definition}
\newtheorem{defn}[thm]{Definition}
\theoremstyle{remark}
\numberwithin{equation}{section}
\author{Andrea Sartori}
\address[Andrea Sartori]{Department of Mathematics, Tel Aviv University, Tel Aviv, Israel, IL}
\email[Andrea Sartori]{Sartori.Andrea.math@gmail.com}
\begin{document}

		\title[Nodal length of toral eigenfunctions]{Asymptotic nodal length and log-integrability of toral eigenfunctions}
		\begin{abstract}
We study the nodal set of Laplace eigenfunctions on the flat $2d$ torus $\mathbb{T}^2$. We prove an asymptotic law for the nodal length of such eigenfunctions, under some growth assumptions on their Fourier coefficients. Moreover, we show that their nodal set is asymptotically equidistributed on $\mathbb{T}^2$.	The proofs are based on Bourgain's de-randomisation technique and	the main new ingredient, which might be of independent interest, is the integrability of arbitrarily large powers of the doubling index of Laplace eigenfunctions on $\mathbb{T}^2$, based on the work of Nazarov \cite{N93,Nun}.
		\end{abstract}

		\maketitle
	\section{Introduction}
		\subsection{Nodal length of Laplace eigenfunctions and the Random Wave Model}
	Given a compact $C^{\infty}$-smooth  Riemannian surface $(M,g)$ without boundary, let $\Delta_g$ be the associated Laplace-Beltrami operator. We are interested in the eigenvalue problem
		\begin{align}
	\nonumber	\Delta_g f_{\lambda}+ \lambda f_{\lambda}=0. 
		\end{align}
Since $M$ is compact, the spectrum of $-\Delta_g$ is a discrete subset of $\mathbb{R}$ with only accumulation point at $+\infty$. The eigenfunctions $f_{\lambda}$ are smooth and their nodal set, that is their zero set, is a smooth $1d$ sub-manifold outside a finite set of points \cite{C76}. In particular, the Hausdorff measure of the nodal set is well-defined and called the  \textit{nodal length}  
\begin{align}
	\nonumber \mathcal{L}(f_\lambda):= \mathcal{H}\{x\in M: f_\lambda(x)=0\}.
	\end{align}

	 Yau \cite{Ysurvey}, and independently Br\"{u}ning \cite{B78}, showed that $\mathcal{L}(f_{\lambda})\geq c\lambda^{1/2}$ for some  $c=c(M)>0$. Yau \cite{Ysurvey}   conjectured the matching upper bound 
	 	\begin{align} 
	 	\nonumber	c\sqrt{\lambda}\leq  \mathcal{L}(f_\lambda)\leq C \sqrt{\lambda},
	 		\end{align}
	 	for some $C=C(M)>0$. Donnelly and Fefferman \cite{DF} showed that Yau's conjecture holds for manifolds  of any dimension, provided that the metric is  real-analytic. Recently, Logunov \cite{L1,L2} and  Logunov-Malinnikova \cite{LM} proved the optimal lower-bound for $C^{\infty}$ manifolds and gave a polynomial upper-bound.

\vspace{2mm}	

	Some heuristic insight into the behavior of the nodal length can be   deduced from a conjecture of  Berry \cite{B1,B2}, known as the Random Wave Model (RWM). The RWM asserts that, on a generic chaotic surface, Laplace eigenfunctions restricted to balls of radius $\approx\lambda^{-1/2}$, the so-called Planck scale,  should behave like the isotropic Gaussian field $F$ with covariance function
	$$\mathbb{E}[F(x)\overline{F(y)}]= J_0\left( |x-y|\right),$$
	where $J_0(\cdot)$ is the $0$-th Bessel function. Berry \cite{B1} found the expected nodal length of $F$ on a box $B$ of unit side length to be 
	$$	\mathbb{E}[\mathcal{L}(F,B)]:=\mathbb{E}[\mathcal{H}\{x\in B: F(x)=0\}]= \frac{1}{2\sqrt{2}}.$$
 Covering $M$ by balls/boxes of Planck-scale radius,  the RWM suggests not only the global behavior
	\begin{align}
		\mathcal{L}(f_{\lambda})= \frac{\vol(M)\sqrt{\lambda}}{2\sqrt{2}}(1+o_{\lambda\rightarrow \infty}(1)), \label{1}
		\end{align}
 but also the macroscopic distribution
	\begin{align}
		\mathcal{L}(f_{\lambda},B)= \frac{ \vol (B) \sqrt{\lambda}}{2\sqrt{2}} (1+o_{\lambda\rightarrow \infty}(1)), \label{2}
	\end{align}
for any ball $B=B(r)$ of fixed, that is independent of $\lambda$, radius $r>0$. In particular, we expect the nodal set to be asymptotically equidistributed on $M$, see also \cite[Chapter 13]{Zbook}. 

\vspace{2mm}

We study a class of \textit{deterministic} Laplace eigenfunctions  on the standard two dimensional torus $\mathbb{T}^2=\mathbb{R}^2/\mathbb{Z}^2$ with moderate growth of their Fourier coefficients. These are known as \textquotedblleft flat\textquotedblright eigenfunctions, see section \ref{notation} below. The main result is that, within the said class, the asymptotic law \eqref{2}, up to a possibly different leading constant, holds in every ball of fixed radius, along a density one sub-sequence of eigenvalues\footnote{Let $S\subset \mathbb{R}$ be some (infinite) sequence, a subsequence $S'\subset S$ has density one if $\lim\limits_{X\rightarrow \infty} |\{\lambda \in S': \lambda \leq X\}|/|\{\lambda \in S: \lambda \leq X\}|=1$. }. While the behavior of the nodal length of \textit{random} Laplace eigenfunctions  has been intensively studied \cite{BMW20,KKW,MRW16,RW08,W10}, to the best of the author knowledge, no other, non-random or non-trivial (e.g. $f_{\lambda}(x)= \cos(a \cdot x)$ with $|a|^2=\lambda$), examples of \eqref{2} or even \eqref{1} are known. Thus the results of this manuscript seem to be the first to address the asymptotic behavior of the nodal length of deterministic Laplace eigenfunctions.

\vspace{2mm}

The proof of the main result is based on the de-randomisation technique pioneered by Bourgain \cite{BU} and developed by Buckley-Wigman \cite{BW16}.  Bourgain's de-randomization asserts that flat eigenfunctions behave according to the RWM in most balls of Planck-scale radius, see Proposition \ref{prop: convergence in distribution} below. In order to apply this technique to study the nodal set, it is thus essential to control the zero set in the balls failing the RWM-type behavior.  In light of Donnelly-Fefferman work \cite{DF}, it is well-understood that, in the real-analytic setting, the nodal set in a ball $B$ can be controlled by the doubling index $N(B)$, a measure of the growth of the function (see section \ref{doubling index sec} below). This leads us to the study, of possible independent interest, of the distribution of the doubling index at Planck-scale, for flat eigenfunctions:   Given any $q>1$, we show that
 \begin{align}
 	\label{anti-concentration}
 	\int_{\mathbb{T}^2} N_{f_{\lambda}}(B(x,\lambda^{-1/2}))^q  dx\leq C,
 \end{align}
 for some $C=C(q)>0$. This requires a combination of some Fourier-analytic techniques borrowed from the work of Nazarov \cite{N93,Nun}, and some arithmetic considerations.  We point out that Yau's conjecture is equivalent to \eqref{anti-concentration} with $q=1$ \cite{R15}. In this direction, our work seems to be the first to address the higher-integrability properties of the doubling index.

\subsection{Statement of the main results}
\label{notation}
Before stating our main results we need to introduce some notation pertaining to Laplace eigenfunctions on $\mathbb{T}^2$. The eigenvalues of $-\Delta$  are, up to a factor of $4\pi^2$,  integers representable as the sum of two squares $\lambda\in 	S:= \{\lambda\in \mathbb{Z}: \lambda= \square + \square\}$ and have multiplicity $N=N(\lambda):=|\{\xi\in \mathbb{Z}^2: |\xi|^2=\lambda\}|$ given by the number of lattice points on the circle of radius $\lambda^{1/2}$. Any toral eigenfunction, with eigenvalue $-4\pi^{2} \lambda$ (we will simply say eigenvalue $\lambda$ from now on), can be expressed as a Fourier sum 
\begin{align}
	f_\lambda(x)=f(x)= \sum_{\substack{\xi\in \mathbb{Z} \\ |\xi|^2=\lambda}}a_{\xi}e(\xi \cdot x ), \label{function}
\end{align}
where $e(\cdot)= \exp(2\pi i \cdot )$ and the $a_{\xi}$'s are complex numbers satisfying $\overline{a_{\xi}}=a_{-\xi}$ for every $\xi$, making $f_{\lambda}$ real valued. Moreover, we normalize $f_{\lambda}$ so that
\begin{align}
\label{normalisation}
||f_{\lambda}||^2_{L^2(\mathbb{T}^2)}= \sum|a_{\xi}|^2=1.
\end{align}

\vspace{2mm}

We first consider the special class of  \textit{Bourgain}'s eigenfunctions, that is functions as in \eqref{function} whose Fourier coefficients satisfy 
\begin{align}
	\label{Bourgain}
	|a_{\xi}|^2= N^{-1},
\end{align}
for all $|\xi|^2=\lambda$. Bourgain's eigenfunctions are especially important in that they precisely behave as predicted by the RWM, that is they resemble, locally almost everywhere, the centered Gaussian random field with covariance $J_0(\cdot)$. In particular, the asymptotic law for their nodal length can be stated  directly without the need for extra notation: 
 \begin{thm}
	\label{thm 1.1}
 There exists a density one subsequence $S'\subset S$  such that for $\lambda\in S'$ the following holds: let  $B\subset \mathbb{T}^2$ be a fixed ball or $B=\mathbb{T}^2$, then we have   
	$$ \mathcal{L}(f_{\lambda},B)= \frac{ \vol (B)}{2\sqrt{2}} (4\pi^2\lambda)^{1/2} (1+o_{\lambda \rightarrow \infty}(1)),$$
	uniformly for all $f_{\lambda}$ as in \eqref{function} satisfying \eqref{Bourgain}. 
\end{thm}

	We point out that the sequence $S'\subset S$ postulated in Theorem \ref{thm 1.1} (and Theorem \ref{thm 1} below) can be described explicitly via some conditions of pure arithmetic nature, see section \ref{sec: arithmetic background} below. We also stress that the rate of convergence in Theorem \ref{thm 1.1} (and Theorem \ref{thm 1} below) does depend on $B$. However, it is \textit{plausible} that the techniques developed in this manuscript, combined with some recent work on lattice points \cite{KS21}, could be pushed forward to show that Theorem \ref{thm 1.1} (and Theorem \ref{thm 1} below) holds in any ball $B$ of radius larger than the Planck-scale, $r>\lambda^{-1/2+\varepsilon}$. This would imply an essentially optimal equidistribution regime for the nodal length. We leave this question to be addressed elsewhere. 

	\vspace{2mm}

We will now introduce the class of flat toral eigenfunctions and some additional notation required to describe their nodal length. \begin{defn}
		\label{def flatness}
	 Fix some positive function $u:\mathbb{R}\rightarrow \mathbb{R}_{>0}$ such that, for every $\varepsilon>0$, $u(N)=o_{N\rightarrow \infty}(N^{\varepsilon})$. A function $f_{\lambda}$ as in \eqref{function} is said to be \textit{flat} if 
	\begin{align}
	\nonumber
		\sup_{|\xi|^2=\lambda}|a_{\xi}|^2\leq \frac{u(N)}{N}.
	\end{align}
\end{defn}
Even though the definition of flat eigenfunctions depends on the particular choice of the function $u$, this will only affect the rate of convergence in Theorem \ref{thm 1} below. Therefore, in order not to overburden the notation, we fix $u$ throughout the whole manuscript. 

\vspace{2mm}
 
 As we will see, flat eigenfunction, as Bourgain's eigenfunction, also behave, locally almost everywhere, as a Gaussian field. However, the covariance structure of the said field, and eventually its nodal length,  depend  on the  measure 
\begin{align}
	\label{spectral measure1}
&	\mu_f=\sum_{\xi} |a_{\xi}|^2 \delta_{\xi/\sqrt{\lambda}} 
\end{align}
where $\delta_{\xi/\sqrt{\lambda}}$ is the Dirac distribution at the point $\xi/\sqrt{\lambda}$, and its Fourier coefficients
$$\widehat{\mu_f}(k)= \int_{\mathbb{S}^1} z^k d\mu_f(z),$$
  $k \in \mathbb{Z}$ and $\mathbb{S}^1\subset \mathbb{R}^2$ is the unit circle.    

\vspace{2mm}

In order to simplify the exposition of the main result, it will be useful to arrange (sequences) of functions $f_{\lambda}$ according to the possible  weak$^{\star}$ limits of $\mu_f$, see \cite{KW17,S18} for a study of the said weak$^{\star}$ limits. First,  observe that $\mu_f$ is a probability measure with support contained in the unit circle $\mathbb{S}^{1}\subset \mathbb{R}^2$ and that the set of probability measures on $\mathbb{S}^1$, equipped with the weak$^{\star}$ topology, is compact. Thus, upon passing to a subsequence, we may (and will) assume that 
\begin{align}
\label{spectral measure}	&\mu_f\longrightarrow \mu &N\rightarrow \infty,
	\end{align} 
where the convergence is with respect to the weak$^{\star}$ topology, for some symmetric\footnote{$\mu(-A)=\mu(A)$ for any measurable set $A\subset \mathbb{S}^1$.} probability measure  $\mu$ on $\mathbb{S}^1$.    Moreover, to avoid degeneracies, we \textit{assume} that the support of $\mu$ is not contained in a line. Sorting (sequences of) functions $f_{\lambda}$ according to their limiting measure avoids an unnecessary dependence (on $f_{\lambda}$) of the leading constant in the following result:

 \begin{thm}
 	\label{thm 1}
 There exists a density one subsequence $S'\subset S$ such that the following holds. Let $\{f_{\lambda}\}_{\lambda\in S'}$ be a sequence of flat, in the sense of Definition \ref{def flatness}, eigenfunctions  with limiting measure  $\mu$ in the sense of \eqref{spectral measure}. Then, for any fixed ball $B\subset \mathbb{T}^2$ or $B=\mathbb{T}^2$, we have  
 $$ \mathcal{L}(f_{\lambda},B)= c_1 \vol (B) (4\pi^2 \lambda)^{1/2} (1+o_{\lambda \rightarrow \infty}(1)),$$
   where
   $$	c_1= \frac{1-|\widehat{\mu}(2)|^2}{2^{5/2}\pi}\int_{0}^{2\pi}\frac{1}{(1-\alpha\cos(2\theta)-\beta \sin(2\theta))^{3/2}}d\theta,$$
  and $\widehat{\mu}(2)= \alpha+i\beta$. 
 \end{thm}

The dependence of the nodal length of toral eigenfunction on the measure $\mu$, as in \eqref{spectral measure}, was already observed, in the \textit{random} setting, by Kurlberg, Krishnapur and Wigman \cite{KKW}. They found that the variance of the nodal length depends on the fourth, as opposed to the second, Fourier coefficient of $\mu$, while the expectation is universal. On one hand, Theorem \ref{thm 1} shows that the nodal length behavior is much richer than what can be captured by random models. And, on the other hand, it precisely describes how the distribution of lattice points affects the nodal length. 

\vspace{2mm}

The main new ingredient, instrumental to the proof of \eqref{2}, which will allow us to show  \eqref{anti-concentration}, is the log-integrability of $f$:
\begin{prop}
	\label{log-integrability}
	Let $q\geq 1$ be an integer. Then there exists a density one subsequence of $S'=S'(q)\subset S$ and some constant $C=C(q)>0$ such that for all $\lambda\in S'$ the following holds: for every flat $f_{\lambda}$, in the sense of Definition \ref{def flatness}, we have 
	$$\int_{\mathbb{T}^2} |\log |f_{\lambda}(x)||^{q} dx \leq C.$$
\end{prop} 
The flatness assumption is not essential for the proof of Proposition \ref{log-integrability} and it can be removed at the cost of a slightly lengthier calculation in section \ref{sec: prelimiaries to log-integrability}. For the sake of keeping the exposition as simple as possible and since flatness is essential to theorems \ref{thm 1.1} and \ref{thm 1}, we decided to present the proof of Proposition \ref{log-integrability} under the flatness assumption. 
\subsection{Notation}
\label{sec: notation}
To simplify the exposition we adopt the following standard notation: we write $A\lesssim B$ and $A\gtrsim B$ to designate the existence of an absolute constant $C>0$ such that $A\leq C B$ and $A\geq C B$. The letters $C,c$ will be used to designate positive constants which may change from line to line. Moreover, for some parameter $\beta>0$, we write $A=O_{\beta}(B)$ to mean that there exists some constant $C=C(\beta)>0$ such that $|A|\leq C B$, if no parameter is specified in the notation, then the constant is absolute. We write $o_{\beta\rightarrow \infty}(1)$ for any function that tends to zero as $\beta\rightarrow \infty$. Finally, given some function $ g: \mathbb{T}^2 \rightarrow \mathbb{R}$ and a parameter $t>0$, we will use the following shorthand notation: 
$\vol(x\in \mathbb{T}^2: g(x)\leq t) =: \vol(g(x)\leq t)$. 

\section{Preliminaries}

\subsection{Convergence of random fields}
\label{proability background}
The proof of Theorem \ref{thm 1.1} and Theorem \ref{thm 1} is based on studying the restriction of $f_{\lambda}=f$ as in \eqref{function} to the box $B(x,1/\sqrt{\lambda})$
\begin{align}
	\label{F_x}
 F_x(y)= f\left( x+\frac{y}{\sqrt{\lambda}}\right),
\end{align}
where $y\in [-1/2,1/2]^2$, on average as $x$ ranges uniformly over a fixed ball $B\subset \mathbb{T}^2$ (or $B=\mathbb{T}^2$).  We observe that $F_x$ can also be thought as a random field from the \textquotedblleft probability\textquotedblright space $(B,d\vol_B)$, with  $d\vol_B= d\vol/\vol(B)$, into $C^{\infty}([-1/2,1/2]^2)$, the space of infinitely differentiable functions on the unit square $[-1/2,1/2]^2$. In order to distinguish these two points of view and to keep track of the dependence on $B$, we write $F_x^B$ for the random field and $F_x$ for the restriction of $f$ around the point $x\in \mathbb{T}^2$.

 Bourgain's de-randomization asserts that  $F_x^B$, converges in distribution, in the appropriate space of functions, to $F_{\mu}$, the Gaussian field with spectral measure $\mu$ given by \eqref{spectral measure}. In this section, we gather the relevant probabilistic background to rigorously express this claim.  We start by  briefly collecting some definitions and notation about Gaussian fields (on $\mathbb{R}^2$).

\vspace{2mm}

\textbf{Gaussian fields}. Let  $\Omega$ be an abstract probability space, with probability measure $\mathbb{P}(\cdot)$ and expectation $\mathbb{E}[\cdot]$. A (real-valued) Gaussian field $F$ is a continuous map $F: \mathbb{R}^2 \times \Omega\rightarrow \mathbb{R}$ such that all  finite dimensional distributions $(F(x_1, \cdot),...F(x_n,\cdot))$ are multivariate Gaussian vectors. We say that $F$ is \textit{centered} if $\mathbb{E}[F]\equiv0$ and \textit{stationary} if its law is invariant under translations $x\rightarrow x+\tau$ for $\tau \in \mathbb{R}^2$. The \textit{covariance} function of $F$ is 
\begin{align}
	\mathbb{E}[F(x)\cdot F(y)]= \mathbb{E}[F(x-y)\cdot F(0)]. \nonumber
\end{align}
Since the covariance is positive definite, by Bochner's theorem, it is the Fourier transform of some measure $\mu$ on $\mathbb{R}^2$. So we have 
\begin{align}
	\mathbb{E}[F(x)F(y)]= \int_{\mathbb{R}^2} e\left(\langle x-y, s \rangle\right)d\mu(s). \nonumber
\end{align}
The measure $\mu$ is called the \textit{spectral measure} of $F$. Since $F$ is real-valued, $\mu$ is symmetric, that is  $\mu(-A)=\mu(A)$ for any (measurable) subset $A\subset \mathbb{R}^2$. By Kolmogorov's theorem, $\mu$ fully determines $F$. Thus, from now on, we will simply write $F=F_{\mu}$ for the centered, stationary Gaussian field with spectral measure $\mu$. Next, we will describe the metric for the aforementioned convergence of random fields.  
\vspace{2mm}

\textbf{The L\'{e}vy–Prokhorov metric.}  Let $C^s(V)$ be the space of $s$-times, $s\ge0$ integer, continuously differentiable functions on $V$, a compact subset of $\mathbb{R}^2$. Since $C^s(V)$ is a separable metric space, Prokhorov's Theorem,  see \cite[Chapters 5 and 6]{BI}, implies that $\mathcal{P}(C^s(V))$, the space of probability measures on $C^s(V)$, is metrizable via the \textit{L\'evy–Prokhorov metric}. This is defined as follows:  for a (measurable) subset $A\subset C^s(V)$, denote by $A_{{+\varepsilon }}$ the  $\varepsilon$-neighborhood of $A$, that is 
$$
A_{{+\varepsilon }}:=\{p\in C^s(V)~|~\exists~ q\in A,\ ||p-q||<\varepsilon \}=\bigcup _{{p\in A}}B(p,\varepsilon),
$$
where $||\cdot||$ is the $C^{s}$-norm and $B(p,\varepsilon)$ is the (open) ball centered at $p$ of radius $\varepsilon>0$. The \textit{L\'evy–Prokhorov metric} $d_P :{\mathcal  {P}(C^s(V))\times\mathcal{P}}(C^s(V))\to [0,+\infty )$ is defined for two probability measures $\mu$  and $\nu$  as:
\begin{align}\nonumber
	d_P (\mu ,\nu ):=\inf \left\{\varepsilon>0: \mu (A)\leq \nu (A_{{+\varepsilon }})+\varepsilon, \ \nu (A)\leq \mu (A_{{+\varepsilon }})+\varepsilon \ \forall~ A\subset C^{s}(V)\right\}. 
\end{align}

\textbf{Convergence of random functions.}
\label{sec: as a random fields}   We are now ready to describe the metric for the convergence of $F_x^B$ to $F_{\mu}$, with $\mu$ as in \eqref{spectral measure}. Given an integer $s\geq 1$, $F_x^B$ induces a probability measure on $C^s([-1/2,1/2]^2)$ via the push-forward measure 
$$ (F_{x})_{\star}\vol_B(A)=\vol_B(\{x\in B: F_x(\cdot)\in A\}) ,$$
where $A\subset C^s([-1/2,1/2]^2)$ is a measurable subset. Similarly, the push-forward of $F_{\mu}$ defines a probability measure on  $C^s([-1/2,1/2]^2)$ which we denote by  $(F_{\mu})_{\star}\mathbb{P}$. We can now measure the distance between $F_x^B$ and $F_{\mu}$ as the distance between their push-forward measures in  $\mathcal{P}(C^s([-1/2,1/2]^2))$, the space of probability measures on $C^s([-1/2,1/2]^2)$,  equipped with the L\'evy–Prokhorov metric. Therefore,  to shorten notation, we will write 
$$ d_P(F^B_{x},F_{\mu}):= d_P((F_{x})_{\star}\vol_B,(F_{\mu})_{\star}\mathbb{P}).$$
\subsection{Arithmetic background}
\label{sec: arithmetic background}
In order to study the zero set of $f_{\lambda}$ in \eqref{function}, we will need some control over its level sets, $\{x\in \mathbb{T}^2: |f(x)|< t \}$ for $t\in (0,\infty)$. In section \ref{sec: prelimiaries to log-integrability}, this will be accomplished by intersecting the said level sets with horizontal and vertical lines. Thus, we will need some information about the restriction of $f_{\lambda}$ to  horizontal and vertical lines. These are function on $L^2(\mathbb{T})$ with spectrum consisting of the projections of the $\xi$'s, as in \eqref{function}, onto the first and second coordinate.  We collect here some facts about the additive structure of these spectra. 

\vspace{2mm}

Given $\lambda\in S$, and some positive integer $\ell>0$, let $\xi^1,...,\xi^{\ell}$ be $\ell$ points on the circle $|\xi|^2=\lambda$. We are interested in the number of solutions to the linear equation 
\begin{align} 
&	\label{semi-correlations equation} \xi_{i}^1+...+\xi_{i}^{\ell}=0, &i=1,2, \end{align}
 where $\xi^j= (\xi_1^j,\xi_2^j)$. Solutions to \eqref{semi-correlations equation} are called \textit{semi-correlations} and have been first studied in \cite{CKW20}, generalizing an argument of Bombieri and Bourgain \cite{BB15}.  Let $S_{\ell}$ be the set of permutations on $\ell$-tuples, when $\ell=2k$ is even, the set of $\ell$-tuples
 $$ \mathcal{T}_{i}(\lambda,\ell)=\{\pi (\xi_i^1, -\xi_i^1,...,\xi_i^{k},-\xi_i^{k}): \pi \in S_{\ell}\}$$
 is the set of \textit{trivial} solutions to \eqref{semi-correlations equation}, that is the set of $\ell$-tuples canceling out in pairs. We call any other solution to \eqref{semi-correlations equation} \textit{non-trivial}. In particular, when $\ell$ is odd, we say that there are no trivial solutions, $\mathcal{T}_{i}(\lambda,\ell)=\emptyset$ . 
 
 \vspace{2mm}
 
 For a density one subsequence of $S$, the number of solutions to \eqref{semi-correlations equation} has been computed precisely in \cite[Theorem 1.3]{CKW20}. Although \cite[Theorem 1.3]{CKW20} is stated in a form weaker than what we need, the proof gives verbatim the following:
\begin{lem}
	\label{semi-correlations}
	Let $\xi=(\xi_1,\xi_2)\in \mathbb{Z}^2$ and $\ell>0$ be an integer. Then, for a density one subsequence of $\lambda\in S$, there exists no non-trivial solution to the linear equation 
	\begin{align}
		& \xi_{i}^1+...+\xi_{i}^{\ell}=0& |\xi|^2=\lambda \hspace{5mm} i=1,2. \nonumber
	\end{align}
That is, all solutions have the form  $\xi_i^1=-\xi_i^2$, ... ,  $\xi^{\ell-1}_i=-\xi^{\ell}_i$, up to permutations. In particular, there are no solutions when $\ell$ is odd. 
\end{lem}

It will also be important that the number of lattice points on the circle $|\xi|^2=\lambda$ tends to infinity as $\lambda\rightarrow \infty$, this is not always the case as circles with prime (congruent to $1$ modulo $4$) radius have only 8 lattice points. However, the following consequence of the Erd\"{o}s-Kac Theorem, see for example \cite[Part III Chapter 3]{Tan} and \cite[Lemma 2.3]{Simrn}, assures that there are always sufficiently many lattice points.  
\begin{lem}
	\label{ERKA}
	There exists a density one subsequence $S'\subset S$ such that for all sufficiently large $\lambda \in S'$ we have 
	$$N\geq (\log \lambda)^{1/8}.$$ 
	\end{lem} 
In particular, Lemma \ref{ERKA} ensures that, up to the rate of convergence, the limits $\lambda\rightarrow \infty$ and $N\rightarrow \infty$ are equivalent. To shorten the exposition, throughout the manuscript, we assume that \textit{every} (density one) subsequence $S'\subset S$ satisfies the conclusion of Lemma \ref{ERKA}. 

\section{Bourgain's de-randomisation: Asymptotic behavior of the nodal length}

Before embarking in the proof of theorems \ref{thm 1.1} and \ref{thm 1}, we will establish some notation and conventions that we will use through the rest of the manuscript. First, we observe that, even if $F_x$ as in \eqref{F_x} is defined on $B(1)$, the box of side $1$ centered at the origin, we can assume that $F_x$ is well defined on $B(R)$, for any \textit{fixed} parameter $R>1$. This observation will be useful because we will often need to slightly change the scale at which we study $F_x$ (from the unit box to the box of side (say) 20). 

 Let $B\subset \mathbb{T}^2$ be a ball, $\mu$ be some symmetric probability measure on unit circle $\mathbb{S}^1$ and $F_x^B$ be as in section \ref{proability background}. We write 
$$ \mathcal{L}(F_x^B) := \vol(\{y\in [-1/2,1/2]^2: F^B_x(y)=0\}),$$
and 
$$\mathcal{L}(F_{\mu}):=\vol(\{y\in [-1/2,1/2]^2: F_{\mu}(y)=0\}).$$
Note that the function $\mathcal{L}(\cdot)$ \textit{always} denotes the nodal length in the unit box and $\mathcal{L}(F_x^B)$ is a random variable on $(B,d\vol_B)$.  Finally,  in order to shorten some statements, when we say that a function $f_{\lambda}$ as in \eqref{function} is flat, from now on, we always mean in the sense of Definition \ref{def flatness}.

The aim of this section is to prove that $\mathcal{L}(F_x^B)$, as a random variable on $(B,d\vol_B)$, converges in distribution, in the sense of \cite[Theorem 2.1]{BI}, to $\mathcal{L}(F_{\mu})$. In other words, we will prove that $\mathcal{L}(F_x^B)$ is close to $\mathcal{L}(F_{\mu})$ outside a small set of \textquotedblleft bad\textquotedblright  $x\in \mathbb{T}^2$.  Formally, we have the following: 
\begin{prop}
	\label{prop: convergence in distribution}
	Let, $\varepsilon>0$ and $F^B_x$ be as in section \ref{sec: as a random fields}.  There exists a density one subsequence $S'=S'(\varepsilon)\subset S$, such that the following holds: let $\{f_{\lambda}\}_{\lambda\in S'}$ be a sequence of flat eigenfunctions  with limiting measure  $\mu$ in the sense of \eqref{spectral measure} then
	\begin{align}
		\nonumber &\mathcal{L}(F^B_x)\overset{d}{\longrightarrow} \mathcal{L}(F_{\mu})&\lambda\rightarrow \infty,
	\end{align}	
	where the convergence is in distribution, uniformly for all balls $B\subset \mathbb{T}^2$ of radius $r>\lambda^{-1/2+\varepsilon}$. 
\end{prop}
We will use the conclusion  of Proposition \ref{prop: convergence in distribution} only for fixed balls $B\subset\ \mathbb{T}^2$. However, as the proof of the stronger claim does not require any additional argument, we decided to include it in the manuscript. The first step in the proof of Proposition \ref{prop: convergence in distribution} consists of showing that $F_x^B$ converges, in the sense of section \ref{sec: as a random fields}, to $F_{\mu}$. This fact has been shown at macroscopic scales in \cite{BU,BW16} and at microscopic scales in \cite[Proposition 4.5]{S20}:
\begin{lem}
	\label{lem: de-randomisation}
	Let $R\geq 1$,  $\varepsilon>0$ and $F^B_x$ be as in section \ref{sec: as a random fields}.  There exists a density one subsequence $S'=S'(\varepsilon)\subset S$ such that the following holds: let $\{f_{\lambda}\}_{\lambda\in S'}$ be a sequence of flat eigenfunctions  with limiting measure  $\mu$ in the sense of \eqref{spectral measure} then, recalling the notation in section \ref{sec: as a random fields}, we have
	\begin{align}
		\nonumber &d_P(F^{B}_x,F_{\mu})=d_P((F_{x})_{\star}\vol_B,(F_{\mu})_{\star}\mathbb{P})\rightarrow 0 &\lambda\rightarrow \infty,
	\end{align}
in the space $\mathcal{P}(C^2([-R/2,R/2]^2))$, where the convergence is uniform for all balls $B\subset \mathbb{T}^2$ of radius $r>\lambda^{-1/2+\varepsilon}$, but depends on $R$. 
\end{lem}
Since we use a different formulation from \cite{BU,BW16,S20}, we will briefly justify Lemma \ref{lem: de-randomisation}:
\begin{proof}[Proof of Lemma \ref{lem: de-randomisation}]
	Let $\Omega$ be the abstract probability space where $F_{\mu}$ is defined and let $\delta>0$ be given. Under the assumptions of Lemma \ref{lem: de-randomisation}, 	\cite[Lemma 4.4]{S20} and \cite[Proposition 4.5]{S20}   states that,  for all sufficiently large $\lambda \in S'$,  there exists a map $\tau: \Omega\rightarrow B$ and a subset $\Omega'\subset \Omega$, both independent of $f_{\lambda}$, such that: 
	\begin{enumerate}
		\item For any measurable $A\subset \Omega$, $\vol(\tau(A))= \pi r^2 \mathbb{P}(A)$,
		\item $\mathbb{P}(\Omega')\leq \delta$,
		\item For all $\omega\not\in \Omega'$, 
		$$||F_{\mu}(\tau(\omega),y)- F_x^B(y)||_{C^2[-R/2,R/2]^2}\leq ||F_{\mu}(\tau(\omega),Ry)- F_x^B(Ry)||_{C^2[-1/2,1/2]^2}\leq R^2\delta.$$
	\end{enumerate} 
	Therefore, given a measurable set $A\subset C^2([-1/2,1/2]^2)$, we have 
	\begin{align}
		(F_\mu)_{\star}\mathbb{P}(A)&= \mathbb{P}(	F_\mu(\omega) \in A)= \mathbb{P}(F_\mu(\omega)\in A, \omega \in \Omega') + \mathbb{P}(F_\mu(\omega)\in A, \omega \not\in \Omega') \nonumber \\
		&  \stackrel{(2)}{\leq} \mathbb{P}(F_\mu(\omega)\in A, \omega \not\in \Omega') +\delta  \stackrel{(1)-(3)}{\leq}   (F_{x})_{\star}\vol_B (A_\delta) +R^2\delta +\delta \nonumber
	\end{align}
	Similarly, we have 
	$ (F_{x})_{\star}\vol_B (A) \leq 	(F_\mu)_{\star}\mathbb{P}(A_{\delta}) +2R^2\delta.$
	Hence, since $\delta$ is arbitrary and $R$ is fixed, we obtain $d_P(F^{B}_x,F_{\mu}) \rightarrow 0,$ as required. 
\end{proof}
 The second step in the proof of Proposition \ref{prop: convergence in distribution} consists of showing that we can pass from the convergence of $F_x$ to $F_{\mu}$ to the convergence of their nodal sets. The following lemma shows that the nodal length is a continuous functional on the appropriate (open) subspace of  $C^2$, see \cite{NS} and \cite[Lemma 6.1]{Simrn}. The precise form of this fact, as stated below, can be found in  \cite[Lemma 6.1]{RS20}:
\begin{lem}
	\label{continuity L}Let $B\subset \mathbb{R}^2$ be a ball/box, let $2B$ be the concentric ball/box of twice the radius/side and let  $C^2_{*}(2B):=\{g\in C^2(2B):  |g|+|\nabla g|>0 \}$. Then $\mathcal{L}(g, B)=\vol( \{x\in B: g(x)=0\})$ is a continuous functional on $C^2_{*}(2B)$. 
\end{lem}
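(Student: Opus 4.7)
The plan is to use the nondegeneracy hypothesis $|g|+|\nabla g|>0$ on $2B$ together with the implicit function theorem to present the zero set $\Gamma:=\{g=0\}\cap 2B$ as a properly embedded $C^2$-smooth $1$-manifold inside $2B$, and then to track $\Gamma$ under $C^2$-small perturbations via local graph charts, handling separately the part of $\Gamma$ near $\partial B$.

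First I would build a finite atlas for $\Gamma$ near $\overline{B}$: since $\overline{B}\cap\Gamma$ is compact and at every point of it some partial derivative of $g$ is nonzero, there exist finitely many axis-aligned open rectangles $U_1,\dots,U_k\subset 2B$ covering $\overline{B}\cap\Gamma$ such that (possibly after swapping coordinates) $|\partial_y g|\ge c_j>0$ on $U_j$ and $\Gamma\cap U_j$ is the graph of a $C^2$ function $y=\gamma_j(x)$ on an interval $I_j$; meanwhile on the compact complement $K:=\overline{B}\setminus\bigcup_j U_j$ one has $|g|\ge\delta>0$. For any sequence $g_n\to g$ in $C^2(2B)$, the inequalities $|\partial_y g_n|\ge c_j/2$ on $U_j$ and $|g_n|\ge\delta/2$ on $K$ persist for large $n$, so $\Gamma_n:=\{g_n=0\}$ satisfies $\Gamma_n\cap\overline{B}\subset\bigcup_j U_j$, the intersection $\Gamma_n\cap U_j$ is the graph of a $C^2$ function $\gamma_{j,n}$ on $I_j$, and the uniqueness clause of the implicit function theorem combined with the $C^2$-convergence of $g_n$ forces $\gamma_{j,n}\to\gamma_j$ in $C^2(I_j)$.

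Second, I would express the nodal length via the graph arclength formula
\[
\mathcal{L}(g,B)\;=\;\sum_{j}\int_{E_j}\sqrt{1+\gamma_j'(x)^2}\,dx,\qquad E_j:=\{x\in I_j:(x,\gamma_j(x))\in B\},
\]
and the analogous expression for $g_n$ with $E_{j,n}$ defined from $\gamma_{j,n}$. Uniform $C^1$-convergence $\gamma_{j,n}\to\gamma_j$ makes the integrand converge uniformly on $I_j$, so the problem reduces to proving that the Lebesgue measure of the symmetric difference $E_j\triangle E_{j,n}$ tends to $0$. Since the scalar map $x\mapsto \psi_j(x):=|(x,\gamma_j(x))-x_B|^2-r_B^2$ is $C^2$ on $I_j$ and carries the boundary crossings, this symmetric difference is contained in $\{|\psi_j|\le\|\psi_{j,n}-\psi_j\|_{L^\infty(I_j)}\}$, and the right-hand side has Lebesgue measure tending to $0$ as long as $\psi_j$ is not identically zero on any subinterval of $I_j$, which by dominated convergence completes the argument.

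The main obstacle is the excluded degenerate scenario in which some $\psi_j$ vanishes on a subinterval, i.e.~an arc of $\Gamma$ coincides with part of $\partial B$ (the model example being $g(x,y)=|x|^2-r_B^2$), at which $\mathcal{L}(\cdot,B)$ is genuinely sensitive to the sign of the perturbation and to whether $B$ is open or closed. This is handled by the standard convention, implicit in \cite{NS} and \cite{RS20}, that one excludes the measure-zero set of "balls" whose boundary contains a piece of $\Gamma$; equivalently, continuity is proved on the open dense subset of $C^2_*(2B)$ where $\psi_j\not\equiv 0$ for every $j$, and the extension to the full set is carried out in the downstream application via Bulinskaya's lemma for the limiting Gaussian field $F_\mu$, which assigns probability zero to this degenerate configuration.
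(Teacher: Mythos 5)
The paper does not prove this lemma; it invokes it as a known stability fact for zero sets under $C^2$ perturbations, citing \cite{NS} and \cite[Lemma 6.1]{RS20}. Your reconstruction via a finite atlas of implicit-function graph charts over the compact piece $\overline B\cap\Gamma$, combined with the arclength formula, is the standard route and is, in outline, sound. You have also correctly identified the one genuine subtlety: the statement as literally written is false at the degenerate locus where an arc of $g^{-1}(0)$ coincides with a piece of $\partial B$ (your example $g(x)=|x-x_B|^2-r_B^2\in C^2_*(2B)$ is exactly right), and in the downstream Lemma \ref{convergence in distribution} the functional is only fed into the Continuous Mapping Theorem, which tolerates a discontinuity set that is null for the law of the limit $F_\mu$.

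Two things in your final step should be tightened. First, the sufficient condition you offer for the Lebesgue measure of $E_j\triangle E_{j,n}$ to vanish --- that $\psi_j$ not vanish identically on any subinterval --- is strictly weaker than what the dominated-convergence step actually needs, namely that the Lebesgue measure of $\{\psi_j=0\}$ be zero. For a function that is merely $C^2$ these conditions differ (zero sets of smooth functions can be fat Cantor sets: empty interior yet positive measure), so the argument as written has a residual gap at the stated $C^2$ generality. In the paper's intended application the gap closes for free because $\mu$ is supported on $\mathbb{S}^1$, so $F_\mu$ satisfies a Helmholtz equation and is real analytic, hence $F_\mu^{-1}(0)\cap\partial B$ is almost surely finite. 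Second, attributing this null-set fact to Bulinskaya's lemma is not quite accurate: Lemma \ref{Bulinskya's Lemma} only asserts that $(F_\mu,\nabla F_\mu)$ is nondegenerate, i.e.\ $F_\mu\in C^2_*$ almost surely, and says nothing about $\mathcal{H}^1(F_\mu^{-1}(0)\cap\partial B)$. That $\mathcal{H}^1(F_\mu^{-1}(0)\cap\partial B)=0$ almost surely is a separate, and true, statement that would need its own one-line justification (real analyticity, or nondegeneracy of the restricted process $F_\mu|_{\partial B}$). With these two corrections the proof is complete and matches the standard treatment in the cited sources.
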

In light of Lemma \ref{continuity L}, Proposition \ref{prop: convergence in distribution} would follow from Lemma \ref{lem: de-randomisation} via the Continuous Mapping Theorem, provided  that $F_{\mu}\in C^2_{*}$. This is a well-known result of Bulinskaya, see  \cite[Lemma 6]{NS}.
\begin{lem}[Bulinskaya's lemma]
	\label{Bulinskya's Lemma}
	Let $F=F_{\mu}$, with $\mu$ a symmetric measure supported on $\mathbb{S}^1$ and $B(2)\subset \mathbb{R}^2$ be the box of side $2$ centered at zero. If $\mu$ is not supported on a line, that is $(F,\nabla F)$ is non-degenerate, then $F\in C^2_*(B(2))$ almost surely, with $C^2_*(B(2))$ as in Lemma \ref{continuity L}. 
\end{lem}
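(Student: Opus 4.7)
\smallskip

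\noindent\textbf{Proof plan for Bulinskya's Lemma.} The goal is to show that, almost surely, the bad set
\[
\mathcal{B}:=\{x\in B(2):F(x)=0\text{ and }\nabla F(x)=0\}
\]
is empty. Since the spectral measure $\mu$ is compactly supported on $\mathbb{S}^1$, all partial derivatives of the covariance kernel $\widehat{\mu}(x)=\int_{\mathbb{S}^1}e(\langle x,\xi\rangle)d\mu(\xi)$ are finite and bounded, so by standard Kolmogorov-type regularity criteria $F$ is almost surely $C^{k}$ on $B(2)$ for every $k$, and in particular the Taylor expansion and sup-norm bounds on $F$ and its derivatives up to order two are uniformly controlled by some (random but finite) constant $M=M(\omega)$ on $B(2)$. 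I will pretend $M$ is deterministic by first restricting to a large compact event $\{M\le M_0\}$ and letting $M_0\to\infty$ at the very end.

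\smallskip

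\noindent\textbf{Covering and Taylor step.} For $\varepsilon\in(0,1)$, cover $B(2)$ with $O(\varepsilon^{-2})$ closed cubes $Q_i$ of side $\varepsilon$ and centers $c_i$. Suppose $\mathcal{B}\cap Q_i\neq\emptyset$, say $x_0\in Q_i$ with $F(x_0)=0$ and $\nabla F(x_0)=0$. A second-order Taylor expansion at $x_0$, using the uniform bound $M$ on the second derivatives of $F$, gives
\[
|F(c_i)|\le \tfrac{1}{2}M\,\varepsilon^{2},\qquad |\nabla F(c_i)|\le M\,\varepsilon.
\]
Hence
\[
\{\mathcal{B}\cap Q_i\neq\emptyset\}\subset \bigl\{|F(c_i)|\le \tfrac{1}{2}M\varepsilon^{2},\ |\nabla F(c_i)|\le M\varepsilon\bigr\}=:A_i(\varepsilon).
\]

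\smallskip

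\noindent\textbf{Non-degeneracy and density bound.} The vector $(F(c_i),\partial_1 F(c_i),\partial_2 F(c_i))\in\mathbb{R}^3$ is centred Gaussian with covariance matrix
\[
\Sigma=\begin{pmatrix}1&0&0\\ 0&(2\pi)^2\int \xi_1^2\,d\mu & (2\pi)^2\int\xi_1\xi_2\,d\mu\\ 0 & (2\pi)^2\int\xi_1\xi_2\,d\mu & (2\pi)^2\int\xi_2^2\,d\mu\end{pmatrix},
\]
the off-diagonal entries in the first row vanishing because $\mu$ is symmetric (Hermitian). By Cauchy--Schwarz, $\det\Sigma>0$ if and only if $\xi\mapsto(\xi_1,\xi_2)$ is not supported on a line through the origin, which is precisely the hypothesis. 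Hence $(F,\nabla F)(c_i)$ has a bounded density on $\mathbb{R}^{3}$, with a bound independent of $i$ by stationarity, and
\[
\mathbb{P}(A_i(\varepsilon))\ \lesssim\ \varepsilon^{2}\cdot \varepsilon\cdot \varepsilon\ =\ \varepsilon^{4}.
\]

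\smallskip

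\noindent\textbf{Union bound and conclusion.} Summing over the $O(\varepsilon^{-2})$ cubes,
\[
\mathbb{P}(\mathcal{B}\neq\emptyset,\ M\le M_0)\ \lesssim_{M_{0}}\ \varepsilon^{-2}\cdot\varepsilon^{4}\ =\ \varepsilon^{2}\ \xrightarrow[\varepsilon\to 0]{}\ 0,
\]
so $\mathbb{P}(\mathcal{B}\neq\emptyset,\ M\le M_0)=0$ for every $M_0$, and letting $M_0\to\infty$ gives $\mathbb{P}(\mathcal{B}\neq\emptyset)=0$. Therefore $F\in C^{2}_{*}(B(2))$ almost surely.

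\smallskip

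\noindent\textbf{Main obstacle.} The only delicate point is the uniform second-derivative bound used in the Taylor step: one must justify that the $C^{2}$ modulus of $F$ on the compact set $B(2)$ is almost surely finite and can be made arbitrarily likely to lie below a deterministic threshold $M_{0}$. This is routine because $\mu$ is compactly supported (so $F$ is a.s.\ smooth, and each derivative is a stationary Gaussian field with finite variance), but it is the step that must be written carefully so that the union bound over $O(\varepsilon^{-2})$ cubes is valid uniformly in $\varepsilon$.
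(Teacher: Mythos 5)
Your argument is correct and is precisely the classical Bulinskaya covering/Taylor/union-bound proof; the paper itself offers no proof but simply cites it from Nazarov--Sodin, whose Lemma~6 is established by exactly this mechanism. The one point to keep tidy in a full write-up is the one you already flag: because the random bound $M$ on $\|F\|_{C^2(B(2))}$ is not independent of the point-values $(F(c_i),\nabla F(c_i))$, one must estimate $\mathbb{P}\bigl(|F(c_i)|\le \tfrac12 M_0\varepsilon^2,\ |\nabla F(c_i)|\le M_0\varepsilon\bigr)$ after intersecting with $\{M\le M_0\}$ (as you do), rather than trying to use $M$ itself inside the density bound.
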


We are finally ready to prove Proposition \ref{prop: convergence in distribution}
\begin{proof}[Proof of Proposition \ref{prop: convergence in distribution}]
Let $S'\subset S$ be given by Lemma \ref{lem: de-randomisation} with (say) $\varepsilon=1/4$. First, applying Lemma \ref{lem: de-randomisation} with (say) $R=4$, we obtain
$$d_P(F^{B}_x,F_{\mu}) \rightarrow 0,$$
	with respect to the $C^2(B(2))$ topology. Moreover,	since the support of $\mu$ is not contained in a line, Lemma \ref{Bulinskya's Lemma} implies that $F_{\mu}\in C^2_*(B(2))$ almost surely. Hence, Lemma \ref{continuity L} together with  the Continuous Mapping Theorem \cite[Theorem 2.7]{BI} imply Proposition \ref{prop: convergence in distribution}, as required.
\end{proof}

	\section{Log-integrability and level-sets estimates}
We formulate (a slightly stronger version of) Proposition \ref{log-integrability} in terms of level sets estimates as follows: 
\begin{prop}
	\label{prop: level sets estimates} 
	Let $q\geq 1$ be an integer.  There exist a density one subsequence of $S'=S'(q)\subset S$, $\lambda_0=\lambda_0(q)>0$ and $\alpha=\alpha(q)>0$ such that the following holds: uniformly for all flat $f_{\lambda}$ in \eqref{function}, with $\lambda>\lambda_0$ in $S'$, and all $t\in (0,\infty)$, we have 
	$$\vol\left(x\in \mathbb{T}^2: \log|f_{\lambda}(x)|<-t^{1/q}\right)\lesssim_q t^{-1+\alpha}.$$
\end{prop}
We are now going to prove Proposition \ref{log-integrability} assuming Proposition \ref{prop: level sets estimates}.
\begin{proof}[Proof of Proposition \ref{log-integrability} assuming Proposition \ref{prop: level sets estimates}]
	As mentioned in section \ref{sec: notation}, given $ g: \mathbb{T}^2 \rightarrow \mathbb{R}$ and a parameter $t>0$, we will use the  shorthand notation $$\vol(x\in \mathbb{T}^2: g(x)\leq t) =: \vol(g(x)\leq t).$$
	We are no ready to begin the proof of  Proposition \ref{log-integrability}.	Let $q\geq 1$ be given, write $f=f_{\lambda}$ and let $S'$ be given by Proposition \ref{prop: level sets estimates}. First, we observe that, by a straightforward integration by parts\footnote{Note that, by Cauchy-Schwarz and \eqref{normalisation}, we have  $\sup_x |f(x)|\leq \sqrt{N}$ so $f$ cannot assume arbitrarily large values.}, we have 
\begin{align}
	\label{eq 1.1}
	\int_{\mathbb{T}^2} |\log|f(x)||^qdx= \int_0^{\infty} t \hspace{1mm}  d\vol( |\log|f(x)||^q\leq t) 	= - \int_0^{\infty} t \hspace{1mm}  d\vol( |\log|f(x)||^q\geq t) \nonumber \\
	= \int_{10}^{\infty} \left(\vol \left( \log|f(x)|\geq t^{1/q}\right) +  \vol \left( \log|f(x)|\leq  -t^{1/q}\right)\right) dt +O(1). 
\end{align}
Since $||f||_{L^2}=1$, Chebyshev's inequality gives 
$$ \vol \left( |f(x)|\geq \exp(t^{1/q})\right) \leq \exp(-2t^{1/q}),$$
thus the first term on the r.h.s. of \eqref{eq 1.1} is bounded by some constant depending on $q$ only. Proposition \ref{prop: level sets estimates} implies that the second term on the r.h.s. of \eqref{eq 1.1} is also bounded   by some constant depending on $q$ only, for all sufficiently large $\lambda \in S'$. By discarding at most finitely many elements of $S'$, we may assume that the claimed bound holds for all $\lambda\in S'$. This concludes the proof of Proposition \ref{log-integrability}. 
\end{proof}
The rest of the section is dedicated to the proof of Proposition \ref{prop: level sets estimates}.
\subsection{Nazarov's result: $\Lambda(p)$-systems and level-sets estimates}
The aim of this section is to present (some of) the results of \cite[Chapter 3]{N93} and \cite{Nun} in a form that it will be useful to prove Proposition \ref{prop: level sets estimates}, we claim no originality and refer the reader to  see directly \cite{N93,Nun}. 

We need to first introduce some definitions: given some $g\in L^2(\mathbb{T})$, the \textit{spectrum} of $g$ is 
$$ \Spec(g):= \left\{n \in \mathbb{Z}:  \widehat{g}(n):= \int_{\mathbb{T}} e(n \cdot x)g(x)dx \neq 0\right\}.$$
We say that a (possibly finite) set $V=\{n_i\}_i\subset \mathbb{Z}$ is a  $\Lambda(p)$-system for some $p\geq 2$ if, for every $g\in L^2(\mathbb{T})$ with $\Spec(g)\subset V$, there exists some constant $C_0=C_0(V,p)>0$, independent of $g$, such that 
\begin{align}
	||g||_{L^p(\mathbb{T})}\leq C_0 ||g||_{L^2(\mathbb{T})}. \label{def tildeC}
\end{align}
We say that a set $V\subset \mathbb{Z}$ is symmetric if $n\in V$ implies $-n\in V$. We will need the following sufficient condition for a symmetric set to be a $\Lambda(p)$-system:
\begin{claim}
	\label{no non-trivial sol}
	Let $V=\{n_i\}_i\subset \mathbb{Z}$ be a symmetric set. Suppose that, for some even $p\geq 2$, the only solutions to
	$$ n_{i_1}+ n_{i_2}+...+ n_{i_p}=0$$
	are trivial, that is $n_{i_1}=-n_{i_2}$ ... , up to permutations. Then, $V$ is a $\Lambda(p)$-system with constant $C_0(p)=c(p)$ independent of $V$. 
\end{claim}

\begin{proof}
	Let $g\in L^2(\mathbb{T})$ with $\Spec(g)\subset V$, we may write $g$ as  
	\begin{align} \nonumber 
		g(x)= \sum_i a_i e(n_i \cdot x),
	\end{align}
	for some $a_i \in \mathbb{C}$. Normalizing $g$, we may assume that	$ ||g||_{L^2}= \sum_i |a_i|^2=1.$
	Now, expanding the $p$-th power of $g$, we have
	$$ ||g||_{L^p}^p= \sum_{i_1,...,i_p} a_{i_1}\overline{a_{i_2}}a_{i_3}... \overline{a_{i_p}} \int_{\mathbb{T}} e(\langle n_{i_1}- n_{i_2}+...- n_{i_p}, x \rangle) dx.$$
	Using  the orthogonality of the exponentials and the assumptions of Claim \ref{no non-trivial sol} (note that, since $V$ is symmetric, the choice of signs in the sum is irrelevant) we deduce
	$$||g||_{L^p}^p= \sum_{\substack{i_1,...,i_p \\ n_{i_1}- n_{i_2}+...- n_{i_p}=0}} a_{i_1}\overline{a_{i_2}}a_{i_3}... \overline{a_{i_p}}= c(p) \left(\sum_i |a_{i}|^2\right)^{p/2}= c(p)$$ 
	where $c(p)$ is the number of permutations of $n_{i_1}=-n_{i_2}$.... and therefore  independent of $g$ and of $V$, as required.
\end{proof}
The last piece of notation that we need is the following: given some $V=\{n_i\}_i\subset \mathbb{Z}$, we denote
\begin{align} 
	&	R(V):= \sup_{\substack{r\in \mathbb{Z} \\ r\neq 0}} |\{(n_i,n_j)\in V^2: n_i-n_j=r\}|, &D(V):=\{n_i-n_j \in \mathbb{Z}: i\neq j\}. \label{definition} 
\end{align}
With the above notation, we have the following theorem from \cite{Nun} whose proof will be given, for completeness, in Appendix \ref{sec: proof of Nazarov's Theorem}. 
\begin{thm}[Nazarov]
	\label{Thm: Logintegrability}
	Let $\varepsilon>0$, $V\subset \mathbb{Z}$ and $R(V)$, $D(V)$ be as in \eqref{definition}. Suppose that $R(V)<\infty$ and $D(V)$ is a $\Lambda(p)$-system for some integer $p>2$ with $C_0=C_0(V,p)$ as in \eqref{def tildeC}. Then there exists some constant $C=C(C_0,\varepsilon, R(V))>0$ such that, uniformly for all  $g\in L^2(\mathbb{T})$ with spectrum contained in $V$ and any set $U\subset \mathbb{T}$ of positive measure, we have  
	$$||g||^2_{L^2(\mathbb{T})}\leq \exp \left( \frac{C}{\rho(U)^{\frac{4}{p}+\varepsilon}}\right)\int_U |g(x)|^2dx, $$
	 where $\rho(\cdot)$ is the (normalized) Lebesgue measure on $\mathbb{T}$.
\end{thm}
We will need the following corollary: 
\begin{cor}
	\label{cor: Nazarov}
	Under the assumptions of Theorem \ref{Thm: Logintegrability} and maintaining the same notation, there exists some constant $C=(C_0,\varepsilon,R(V)) >0$ such that, uniformly for all $g\in L^2(\mathbb{T})$ with spectrum contained in V and satisfying $||g||^2_{L^2}\geq 1/2$, we have  
	 $$\rho \left(x \in \mathbb{T} : \log |g(x)|\leq -t^{\frac{4}{p}+ \frac{\varepsilon}{2}}\right) \leq C t^{-1- \varepsilon/3}.$$
\end{cor}
The constant $1/2$ in the postulated lower bound for $||g||^2_{L^2}$ in the statement of Corollary \ref{cor: Nazarov} is arbitrary and it could be substituted by any other (absolute) constant.  
\begin{proof}
	 Let $0<\delta<1$ and define the set $U_{\delta}= \{x\in \mathbb{T}: |g(x)|\leq \delta\}$. Theorem \ref{Thm: Logintegrability}, applied to $U=U_{\delta}$ with some $\varepsilon_1>0$ to be chosen later, implies that there exists some $C=C(C_0,\varepsilon_1, R(V))>0$ such that
	$$1 \leq  2\exp \left( \frac{C}{\rho(U_{\delta})^{\frac{4}{p}+\varepsilon_1}}\right) \rho(U_{\delta}) \delta ^2 \leq  \exp \left( \frac{100C}{\rho(U_{\delta})^{\frac{4}{p}+\varepsilon_1}}\right) \delta ^2.$$
	 Therefore,  for some $C_1=C_1(C,\varepsilon_1)>0$ and some $c=c(p)>0$, we have 
	\begin{align}
		\rho (U_{\delta})\leq C_1 (-\log \delta)^{-\frac{p}{4}+c\varepsilon_1}. \nonumber
	\end{align}
	Taking $\delta= \exp(-t^{\frac{4}{p}+ \frac{\varepsilon}{2}})$ and choosing $\varepsilon_1$ appropriately in terms of $\varepsilon$ and $p$, we deduce 
	$$  \rho \left( x\in \mathbb{T}: \log |g(x)|\leq -t^{\frac{4}{p}+ \frac{\varepsilon}{2}}\right) \leq C_1 t^{-1- \varepsilon/3},$$
	as required.
\end{proof}
\subsection{Preliminaries for the proof of Proposition \ref{prop: level sets estimates}}
\label{sec: prelimiaries to log-integrability}
Before embarking in the proof of Proposition \ref{prop: level sets estimates}, we will need to set up some relevant notation and make a couple of observations, which, for convenience, we collect in this section.  Given $t\in (0,\infty)$ and an integer $q\geq 1$,  we will bound the volume of the set
\begin{align}
\label{def A_{t,q}}	A=A_{t,q}:=\{x\in \mathbb{T}^2: \log|f_{\lambda}(x)|<-t^{1/q} \}
\end{align}
by using Corollary \ref{cor: Nazarov} to estimate the Lebesgue measure of its intersection with horizontal lines.  To this end, we need to introduce some notation. First, for $\xi \in \mathbb{Z}^2$ and $x\in \mathbb{T}^2$,  we write  $\xi=(\xi_1,\xi_2)$ and $x=(x_1,x_2)$. Second, for $f_{\lambda}=f$ as in \eqref{function} and some fixed $x_2\in \mathbb{T}$, we write
\begin{align}
\label{def; f^1}
 H_{x_2}(f)(\cdot)= f(\cdot, x_2):=  \sum_{\xi_1} b_{\xi_1} e(\xi_1\cdot),
\end{align}
where
  $$b_{\xi_1}= b(\xi,x_2):= a_{(\xi_1,\xi_2)}e(\xi_2\cdot x_2) + a_{(\xi_1,-\xi_2)} e(-\xi_2\cdot x_2). \nonumber$$ 
   That is, $H_{x_2}f$ is $f$ considered as a function of the $1$-th coordinate only and therefore  $H_{x_2}f\in L^2(\mathbb{T})$. In particular, the $L^2$-norm of $H_{x_2}f$ is a function of $x_2$ only. We denote (the square of) this function by $P$: 
\begin{align} 
\label{eq: L^2-norm} P(x_2)&= P_{f}(x_2):=	||H_{x_2}f||^2_{L^2}= \int_{\mathbb{T}} |f(x_1,x_2)|^2 dx_1 =\sum_{\xi_1} |b_{\xi_1}|^2 \\
&= \sum_{\xi_1} \left(|a_{(\xi_1,\xi_2)}|^2+|a_{(\xi_1,-\xi_2)}|^2\right) + Q(x_2)= 1+ Q(x_2),\nonumber
	\end{align}
where 
\begin{align}
&	Q(x_2)= Q_{f}(x_2):= \sum_{\xi_2} d_{\xi_2}e(2\xi_2\cdot x_2)  \nonumber
& d_{\xi_2}= d(\xi):=  a_{(\xi_1,\xi_2)} a_{(-\xi_1,\xi_2)}, 
\end{align}
and in \eqref{eq: L^2-norm} we have used the normalization $\sum_{\xi} |a_{\xi}|^2=1$ and the fact that $\overline{a_{\xi}}=a_{-\xi}$.  
Suppose that $f$ is flat as in Definition \ref{def flatness}, then the $L^2$- norm of $P$ is
$$	||P||^2_{L^2}= 1 + \int_{\mathbb{T}} Q(x_2)dx_2 + 	\int_{\mathbb{T}} \overline{ Q}(x_2)dx_2 + ||Q||^2_{L^2}= 1 + ||Q||^2_{L^2} + O(u(N)\cdot N^{-1/2}), $$
where the error term comes from (possible) terms with $\xi_2=0$ and the flatness assumption in Definition \ref{def flatness}. Now, again using the flatness assumption in Definition \ref{def flatness}, we compute 
$$||Q||^2_{L^2}= \sum_{\xi_2}|d_{\xi_2}|^2\lesssim  u(N)^2N^{-1}\lesssim N^{-1/2}.$$
Therefore, in light of Lemma \ref{ERKA}, we have shown the following claim: 
\begin{claim}
	\label{claim: bound on Vf norm} There exists a density one subsequence $S'\subset S$ such that for all sufficiently large $\lambda\in S'$ the following holds: suppose that $f_{\lambda}$  in \eqref{function} is flat then we have 
$$ 	||P||^2_{L^2(\mathbb{T})}> 1/2,$$
where $P$ is as in \eqref{eq: L^2-norm}.
\end{claim}
We will also need the following simple consequence of Lemma \ref{semi-correlations}: 
\begin{claim}
	\label{claim:  E2}
	Let $p\geq 2$ be a positive even integer, define the set $V_{2,\lambda}=V_2:=\{2\xi_2: \xi=(\xi_1,\xi_2), \hspace{2mm}|\xi|^2=\lambda\} \cup \{0\}$ and let $D(V_2)$ be as in \eqref{definition}. Then, there exists a density one subsequence $S'=S'(p)\subset S$ such that,  for all sufficiently large $\lambda\in S'$,  $D(V_2)$ is a $\Lambda(p)$-system with constant $C_0=C_0(p)$ independent of $V_2$.  
\end{claim}
\begin{proof}
We select the subsequence $S'$ so that the conclusion of  Lemma \ref{semi-correlations} holds for all $\ell \leq 2p$.  Moreover, in light of Lemma \ref{ERKA}, we may assume that $V_2$ has more than $p$ elements. Therefore, upon observing that $V_2$ is a symmetric set, Claim \ref{no non-trivial sol} implies that, in order to prove Claim \ref{claim:  E2}, it is enough to show the following: given $p$ elements  $n_{1},...,n_{p}\in D(V_2)$ satisfying 
\begin{align}
	n_{1}+...+n_{p}=0, \label{extra bs}
\end{align}
we have only trivial solutions to \eqref{extra bs}, that is $n_{1}= -n_{2}$..., up to permutations. 

First we observe that, since $0\in V_2$, every $n_i\in D(V_2)$ is either a single projection\footnote{in the sense of being the projection of $\xi$ onto the second coordinate.} $n_i=2\xi^i_2$ or the difference of two (different) projections $n_i=2(\xi^i_2-\eta^i_2)$ for $\xi^i_2\neq \eta^i_2$. In the former case, we say, for convenience, that $n_i$ is of Type I.  Suppose that none of the $n_{i}$'s is of type I, then \eqref{extra bs} reads 
$$ 2( \xi^1_2-\eta^1_2 +... + \xi^p_2-\eta^p_2 )=0.$$
Thus,  all solutions to \eqref{extra bs} are trivial by the choice of $S'$. Now, suppose that at least one of the $\{n_{i}\}$ is of Type I. If the number of Type I integers is odd then the conclusion of Lemma \ref{semi-correlations} asserts that there are no solutions. Thus, there must be an even number of Type I integers among the $\{n_{i}\}$. Hence, all solutions to \eqref{extra bs} are trivial again by the choice of $S'$.

\end{proof}
\subsection{Concluding the proof of Proposition \ref{prop: level sets estimates}}
We are now ready to prove Proposition \ref{prop: level sets estimates}: 
\begin{proof}[Proof of  Proposition \ref{prop: level sets estimates}] 
We choose the subsequence $S'$ to be the intersection of three sub-sequences $S_1,S_2$ and $S_3$ as follows. We pick $S_1=S_1(q)$ so that the conclusion of Lemma \ref{semi-correlations} holds for all $\ell\leq 10p$ for some $p=p(q)$ to be chosen later. We choose $S_2$ so that the conclusion of  Claim \ref{claim: bound on Vf norm} holds and  $S_3$  so that the conclusion of Claim \ref{claim:  E2} holds. Having prescribed $S'$, we begin the proof of Proposition \ref{prop: level sets estimates}.

	Given $t\geq 0$ and an integer $q>0$, let $A$ be as in \eqref{def A_{t,q}}. Observe that 
	\begin{align} 
	\label{eq: important}	\vol(A)= \vol (A \cap E) + \vol (A \cap F),
		\end{align} 
	where 
	$$E=E_{t,q}:= \{x=(x_1,x_2)\in \mathbb{T}^2: \log|P(x_2)|> -t^{1/q} \}$$
	and $F$ is the complement of $E$ and $P(\cdot)$ is as in \eqref{eq: L^2-norm}.  First, we are going to bound the first term on the r.h.s. of \eqref{eq: important}. Writing $\rho(\cdot)$ for the (normalized) Lebesgue measure on $\mathbb{T}$, by Fubini we have  
	\begin{align}
	 \vol (A \cap E) \leq \int_{\tilde{E}} \rho\left(x_1: \log |f(x_1,x_2)|\leq -t^{1/q} \right) dx_2,  \nonumber
	\end{align}
where $\tilde{E}$ is the projection of $E$ onto the second coordinate. Using the trivial bound $\rho(\tilde{E})\leq 1$ and in light of the notation introduced in \eqref{def; f^1}, we deduce
 	\begin{align}
 	\vol (A \cap E) &\leq \sup_{x_2\in \tilde{E}} \rho\left(x_1: \log \left|f(x_1,x_2)\right|\leq- t^{1/q}\right)= \sup_{x_2\in \tilde{E}} \rho\left(x_1: \log \left| (H_{x_2}f)(x_1)\right|\leq- t^{1/q} \right) \nonumber \\
 	 &= \sup_{x_2\in \tilde{E}} \rho\left(x_1: \log \left|(H_{x_2}f)(x_1)\right|- \log( |P(x_2)|^{1/2})\leq- t^{1/q}- \log( |P(x_2)|^{1/2})  \right) \nonumber \\
 	 &\leq \sup_{x_2\in \tilde{E}} \rho\left(x_1: \log \left|\frac{(H_{x_2}f)(x_1)}{P^{1/2}}\right|\leq- \frac{t^{1/q}}{2}  \right) . \label{extra steps}
 \end{align}
To estimate the r.h.s. of \eqref{extra steps}, we wish to apply Corollary \ref{cor: Nazarov} to the function $$g(x_1)=g_{x_2}(x_1):= \frac{H_{x_2}f(x_1)}{P^{1/2}(x_2)}.$$ Observe that, for $x_2\in \tilde{E}$, $g$ is well-defined and, by \eqref{eq: L^2-norm}, we also have 
$$||g||_{L^2}=1.$$ 
 Therefore, in order to apply Corollary \ref{cor: Nazarov}, it is enough to verify the assumptions of Theorem \ref{Thm: Logintegrability} for the set $V_1=V_{1,\lambda}= \{\xi_1: \xi=(\xi_1,\xi_2), \hspace{3mm} |\xi|^2=\lambda \}$, which is a symmetric set.  Let $R(V_1)$ and $D(V_1)$ be as in \eqref{definition}, then, by Lemma \ref{semi-correlations} and  Claim \ref{no non-trivial sol},  $R(V_1)=3$   and $D(V_1)$ is a $\Lambda(p)$-system with constant $C_0=C_0(p)$ independent of $V_1$. Thus, we are in the position of applying Corollary \ref{cor: Nazarov} with (say) $p= 8q$ and $\varepsilon= 1/q$, to find some constant $C=C(q)$, independent of $x_2$, such that 

	\begin{align}
	\label{eq: important 1}
\sup_{x_2 \in \tilde{E}}	\rho\left(x_1: \log \left|\frac{(H_{x_2}f)(x_1)}{P^{1/2}}\right|\leq -\frac{t^{1/q}}{2} \right)\leq C t^{-1-1/(3q)}.
	\end{align} 

We are now going to bound the second term on the r.h.s. of \eqref{eq: important}. Observe that
	\begin{align}
	\vol (A \cap F) \leq \vol(F)\leq  \rho\left(x_2: \log|P(x_2)|\leq -t^{1/q} \right). \label{extra steps 2}
\end{align}
Similarly to the above argument, we are going to use Corollary \ref{cor: Nazarov} to bound the r.h.s. of \eqref{extra steps 2}. Thanks to claims \ref{claim: bound on Vf norm} and \ref{claim:  E2}, we may apply Corollary \ref{cor: Nazarov} to the function $P$ with   $p= 8q$ and $\varepsilon= 1/q$ to see that there  exist constants $\lambda_0=\lambda_0(q)$ and $\tilde{C}=\tilde{C}(q)$ such that  for all $\lambda>\lambda_0$ in $S'$, we have 
	\begin{align}
	\label{eq: important 2}
 \rho\left(x_2: \log|P(x_2)|\leq -t^{1/q} \right)\leq \tilde{C} t^{-1-1/(3q)}.
\end{align} 
Hence, Proposition \ref{prop: level sets estimates}, with $\alpha= 1/(3q)$, follows by combining \eqref{eq: important}, \eqref{extra steps}, \eqref{eq: important 1}, \eqref{extra steps 2} and \eqref{eq: important 2}. 
\end{proof}
\section{Moments of $\mathcal{L}(F_x)$}
\label{sec:moments}
The aim of this section is to apply Proposition \ref{prop: level sets estimates} to show that arbitrarily high moments of  $\mathcal{L}(F_x)$, with $F_x$ as in \eqref{F_x}, are integrable. In  particular, this will show that the \textquotedblleft bad\textquotedblright set of $x\in \mathbb{T}^2$, coming from Proposition \ref{prop: convergence in distribution}, does not significantly contribute to the moments of $\mathcal{L}(F_x)$, leading to the fundamental Proposition \ref{prop: convergence of moments} below. The main result of this section is the following:
\begin{prop}
	\label{prop: integrability nodal length}
	Let $q\geq 1$ be an integer.  There exists a density one subsequence $S'=S'(q)\subset S$ and some constant $C=C(q)>0$ such that for all $\lambda\in S'$ the following holds: suppose that $f_{\lambda}$  in \eqref{function} is flat, then we have 
	$$ \int_{\mathbb{T}^2} \mathcal{L}(F_x)^{q} dx \leq C,  $$
	where $F_x$ is as in \eqref{F_x}. 
\end{prop}
The following corollary is a direct consequence of Proposition \ref{prop: integrability nodal length}: 
\begin{cor}
	\label{cor: cor log integrability}
	Let $q\geq 1$ be an integer. There exists a density one subsequence $S'=S'(q)\subset S$ such that for all $\lambda\in S'$ and all fixed balls $B\subset \mathbb{T}^2$ the following holds: there exists some constant $C=C(q,B)>0$ such that if  $f_{\lambda}$ in \eqref{function} is flat then
		 	$$ \frac{1}{\vol B}\int_{B} \mathcal{L}(F_x)^{q} dx \leq C,  $$
	where $F_x$ is as in \eqref{F_x}. 			
\end{cor}

Thanks to Corollary \ref{cor: cor log integrability} we can \textquotedblleft upgrade\textquotedblright the convergence in distribution in Proposition \ref{prop: convergence in distribution} to convergence of expectations.  Formally, we will need the following well-known fact about uniform integrability \cite[Theorem 3.5]{BI}:
\begin{lem}\label{th:DCTh}
	Let $X_n$ be a sequence of random variables such that $X_n\overset{d}{\rightarrow} X$, that is, convergence in distribution. Suppose that there exists some $\alpha>0$ such that $\mathbb{E}[|X_n|^{1+\alpha}]\le C<\infty$ for some $C>0$, uniformly for all $n\geq 1$. Then,
	\begin{align}
		&\mathbb{E}[X_n]\to \mathbb{E}[ X] &n\rightarrow \infty. \nonumber
	\end{align}
\end{lem}

We are finally ready to state (and prove) the main consequence of Proposition \ref{prop: integrability nodal length}:
\begin{prop}
	\label{prop: convergence of moments}
 There exists a density one subsequence $S'\subset S$ such that the following holds: let $\{f_{\lambda}\}_{\lambda\in S'}$ be a sequence of flat eigenfunctions  with limiting measure  $\mu$ in the sense of \eqref{spectral measure} and $B\subset\mathbb{T}^2$ be a fixed ball or $B=\mathbb{T}^2$, then
	\begin{align}
		\nonumber &\frac{1}{\vol B}\int_{B}\mathcal{L}(F_x)dx\rightarrow \mathbb{E}[\mathcal{L}(F_{\mu})]&\lambda\rightarrow \infty,
	\end{align}	
where $F_x$ is as in \eqref{F_x}.  
\end{prop}
\begin{proof}
Let $S'$ be the intersection of the sub-sequence given by Proposition \ref{prop: convergence in distribution} and the sub-sequence in Corollary \ref{cor: cor log integrability} applied with (say) $q=2$. Then, under the assumptions of Proposition \ref{prop: convergence of moments}, we have
$$\mathcal{L}(F_x^B)\overset{d}{\longrightarrow} \mathcal{L}(F_{\mu}).$$
Now, Proposition \ref{prop: convergence of moments} follows from Corollary \ref{cor: cor log integrability} via Lemma \ref{th:DCTh}.
\end{proof}
The rest of this section is dedicated to the proof of Proposition \ref{prop: integrability nodal length}. 
\subsection{Proof of Proposition \ref{prop: integrability nodal length}}
\label{doubling index sec}
In this section, given a box $B\subset \mathbb{R}^n$ and $r>0$, we write $rB$ for the concentric box of $r$-times the side. It is a well-known fact, see \cite[Proposition 6.7]{DF},  that the nodal length of Laplace eigenfunction, on real analytic manifolds, can be bounded in terms of the doubling index. Given a (say) $C^3$ function $g: 3B\rightarrow \mathbb{R}$, the doubling index of $g$ on the box $B$ is defined by 
\begin{align}
	\label{def: doubling index}
	N_g(B):=  \log \frac{\sup_{2B}|g|}{\sup_B |g|} + 1.
\end{align}
 We added $1$ in \eqref{def: doubling index} to ensure that the doubling index is strictly greater than zero. 
 
  In order to prove Proposition \ref{prop: integrability nodal length}, we will use following lemma, see \cite[Lemma 2.6.1]{LMlecturenotes} and \cite[Proposition 6.7]{DF}, to control the (local) nodal length:
 \begin{lem}
	\label{doubling index}
	Let $\tilde{B} \subset \mathbb{R}^{3}$ be the unit box, suppose that $h: 3\tilde{B}\rightarrow \mathbb{R}$ is an harmonic function $(\Delta h=0)$, then 
	\begin{align}
		\mathcal{V}\left(h, \tilde{B}\right)\lesssim  N_h(2\tilde{B}),   \nonumber
	\end{align}
where $\mathcal{V}\left(h, \tilde{B}\right)= \mathcal{H}^{2}(\{x\in \tilde{B}; h(x)=0 \})$. 
\end{lem}
Lemma \ref{doubling index} has the following direct consequence: 
\begin{lem}
	\label{lem: control nodal length} 
	Let $F_x$ be as in \eqref{F_x} and $q\geq 1$ be a fixed integer. We have the following bound: 
	$$\mathcal{L}(F_x)^{q}\lesssim_q N_{F_x}(B)^q,$$
	where $B=B(2)\subset \mathbb{R}^2$ is the box of side $2$ centered at zero. 
\end{lem}
\begin{proof}
First, observe that, by trivially extending the domain of $F_x$, we may assume that $F_x$  is well-defined on the box of side (say) $20$ centered at $0$. 	Let $B= [-1,1]^2$ and $\tilde{B}=[-1,1]^2\times[-1,1]$. Then the \textquotedblleft harmonic lift\textquotedblright of $F_x$
$$h(y,s) := F_x(y)e^{2\pi s}: 3\tilde{B}\rightarrow\mathbb{R},$$  
 is harmonic, that is $\Delta h=0$. Therefore, Lemma \ref{doubling index} gives 
 \begin{align}
\nonumber
 	\mathcal{V}\left(h, \frac{1}{2}\tilde{B}\right)= \mathcal{H}^{2}(\{x\in 2^{-1}\tilde{B}; h(x)=0 \}) \lesssim  \log \frac{\sup_{2\tilde{B}}|h|}{\sup_{\tilde{B}} |h|} +1.   
 \end{align} 
Observe that if $F_x$ vanishes at some point $y$ then $h$ vanishes on the line $\{y\}\times[-1,1]$, thus
 $$\mathcal{L}(F_x)\lesssim 	\mathcal{V}\left(h, \frac{1}{2}\tilde{B}\right).$$
Moreover, since $\sup_{2\tilde{B}}|h|= e^{4\pi } \sup_{2B} |F_x|$ and $\sup_{\tilde{B}}|h|= e^{2\pi } \sup_{B} |F_x|$ , we also have 
 $$	\mathcal{V}\left(h, \frac{1}{2}\tilde{B}\right)\lesssim 1 + \log \frac{\sup_{2B}|F_x|}{\sup_{B} |F_x|}. $$
 Hence, Lemma \ref{lem: control nodal length} follows from the inequality $(X+Y)^{q}\lesssim_{q} |X|^{q}+ |Y|^{q}$.
\end{proof}
Lemma \ref{lem: control nodal length}, up to scaling factors, reduces  Proposition \ref{prop: integrability nodal length} to \eqref{anti-concentration} which, we are now going to show, follows from Proposition  \ref{prop: level sets estimates}. In the proof of \eqref{anti-concentration}, we will need a standard consequence of the elliptic estimates for harmonic functions \cite[Page 332]{Ebook}.  The elliptic estimates state that any $L^p$-norm, for $2\leq p\leq \infty$, of a harmonic function in a ball/box $B$ is bounded by its $L^2$-norm on (say) $\tfrac{3}{2}B$. More precisely, we have the following fact:
\begin{lem}
	\label{elliptic regularlity}
	Let $F_x$ be as in \eqref{F_x}, $h(y,s)= F_x(y)\cdot e^{2\pi s}$ be as in (the proof of) Lemma \ref{lem: control nodal length}, $B=[-1,1]^2$ and $\tilde{B}=[-1,1]^2\times [-1,1]$. Then, we have 
	$$ \left(\sup_{2B} |F_x|\right)^2\leq  \left(\sup_{2\tilde{B}}|h|\right)^2 \lesssim ||h||^2_{L^2(3\tilde{B})}. $$
\end{lem}  
We are finally ready to prove Proposition \ref{prop: integrability nodal length}. 
\begin{proof}[Proof of Proposition \ref{prop: integrability nodal length}]

Given $q\geq 1$, let $S'\subset S$ be given by Proposition \ref{prop: level sets estimates}. Moreover,  by trivially extending the domain of $F_x$, we may assume that $F_x$  is well-defined on the box of side (say) $20$ centered at $0$.  Thanks to Lemma \ref{lem: control nodal length}, it is enough to show that there exists some constant $C=C(q)$ such that 
\begin{align}
\label{eq: 1.1}
\int_{\mathbb{T}^2}\left(\log \frac{\sup_{2B}|F_x|}{\sup_{B} |F_x|}\right)^q dx \leq C,
\end{align}
where $B=B(2)\subset \mathbb{R}^2$ is the box of side $2$ centered at zero. Using the inequality  $(X+Y)^{q}\lesssim_{q} X^{q}+ Y^{q}$ it is enough to show that 
$$\int_{\mathbb{T}^2}\left|\log \sup_{2B}|F_x|\right|^q dx \leq C , \hspace{10mm}\int_{\mathbb{T}^2}\left|\log \sup_{B}|F_x|\right|^q dx \leq C. $$
 We are just going to show the first claimed bound, the proof of the second one being identical. First, as in the proof of Proposition \ref{log-integrability}, recalling the notation in section \ref{sec: notation}, we may write 
 \begin{align}
 	\label{to prove} \int_{\mathbb{T}^2}\left|\log \sup_{2B}|F_x|\right|^q dx = \int_{10}^{\infty} \vol\left(  \log \sup_{2B}|F_x|> t^{1/q}\right) + \vol\left(  \log \sup_{2B}|F_x|\leq- t^{1/q}\right) dt +O(1).
 	\end{align}
  Since $\sup_{2B}|F_x|\geq |f(x)|$, the second term on the r.h.s. of \eqref{to prove} is bounded by some constant depending on $q$ only by Proposition \ref{prop: level sets estimates}. Therefore, in order to prove Proposition \ref{prop: integrability nodal length}, it is enough to show that the first term on the r.h.s. of \eqref{to prove} is bounded   by some constant depending on $q$ only. 
  
  Writing $h$ to be the harmonic lift of $F_x$ introduced in the proof of Lemma \ref{lem: control nodal length}, $B=[-1,1]^2$ and $\tilde{B}=[-1,1]^2\times [-1,1]$, Lemma \ref{elliptic regularlity} gives
$$  \left(\sup_{2B} |F_x|\right)^2\leq  \left(\sup_{2\tilde{B}}|h|\right)^2 \lesssim ||h||^2_{L^2(3\tilde{B})}\lesssim \int_{3B}|F_x(y)|^2dy.$$
Since 
 $$\int_{\mathbb{T}^2} ||F_x||^2_{L^2(3B)}= \sum_{\xi,\eta}a_{\xi}\overline{a_{\eta}} \int_{\mathbb{T}^2} e((\xi-\eta)x)dx \int_{3B} e((\xi-\eta)\lambda^{-1/2}y)dy= O(1),$$
 Chebyshev's inequality gives  $$\vol\left( \sup_{2B} |F_x|> \exp(t^{1/p})\right) \lesssim \exp(-2t^{1/p}).$$
 Therefore the first term on the r.h.s. of \eqref{to prove} is bounded. This concludes the proof of Proposition \ref{prop: integrability nodal length}. 
\end{proof}

\section{Concluding the proofs of the main results}
\subsection{Nodal length of Gaussian random fields}
In this section we collect a few preliminary results towards the proofs of Theorem \ref{thm 1.1} and Theorem \ref{thm 1}. 

\begin{lem}
	\label{expectation}
Let $\mu$ be a symmetric probability measure supported on $\mathbb{S}^1$, and not supported on a line. Then we have
\begin{align}
	\mathbb{E}[\mathcal{L}(F_{\mu})]=c_1 \cdot (2\pi), \nonumber
\end{align}
where
\begin{align}
	c_1= \frac{1-|\widehat{\mu}(2)|^2}{2^{5/2}\pi}\int_{0}^{2\pi}\frac{1}{(1-\alpha\cos(2\theta)-\beta \sin(2\theta))^{3/2}}d\theta, \nonumber
\end{align}
and $$\widehat{\mu}(2)=\int_{\mathbb{S}^1} z^2d\mu(z)=: \alpha+i\beta.$$
\end{lem}
The proof of Lemma \ref{expectation} follows by a standard use of the Kac-Rice formula \cite[Theorem 6.2]{AW}.
\begin{proof}
	We write $F=F_{\mu}$. Since $\mu$ is not supported on a line, $(F,\nabla F)$  is non-degenerate, thus we  apply the Kac-Rice formula \cite[Theorem 6.2]{AW} to see that 
	\begin{align}
		\mathbb{E}[\mathcal{L}(F_{\mu}, B)]= \int_{ B}\mathbb{E}\left[|\nabla F(y)|| F(y)=0\right]\phi_{F(y)}(0)dy, \label{kac-rice} 
	\end{align}
	where $\phi_{F(y)}(\cdot)$ is the density of the random variable $F(y)$. Since $F(y)$ is Gaussian with mean zero and variance $1$, $\phi_{F(y)}(0)= 1/\sqrt{2\pi}$. As $F$ and $\nabla F$ are independent (this can be seen directly differentiating $\mathbb{E}[F(y)^2]=1$) and using stationarity, we also have 
	$$\mathbb{E}\left[|\nabla F(y)|| F(y)=0\right]=\mathbb{E}[|\nabla F(y)|]= \mathbb{E}[|\nabla F(0)|] .$$ 
	 Thus,  (\ref{kac-rice}) simplifies to 
	\begin{align}
		\mathbb{E}[\mathcal{L}(F_{\mu})]=\frac{1}{\sqrt{2\pi}} \cdot \mathbb{E}[|\nabla F(0)|]= \frac{ 2\pi}{\sqrt{2\pi}} \mathbb{E}[(2\pi)^{-1}|\nabla F(0)|]  . \label{16}
	\end{align}
	
	Now, we compute the covariance matrix of $\nabla F$. First we write 
	$$\widehat{\mu}(2)= \alpha +i\beta:=\int_0^1 \cos(2\theta)d\mu(e(\theta))+i\int_0^1 \sin(2\theta)d\mu(e(\theta))$$ and
	$$\mathbb{E}[F(x)F(y)]=\int_{\mathbb{S}^2}e(\langle x-y, s\rangle) d\mu(s).$$
	By using the relations $\cos(2\theta)=2\cos^2(\theta)-1=1-2\sin^2(\theta)$ and $\sin(2\theta)=2\sin(\theta)\cos(\theta)$ and writing $s=(s_1,s_2)$ and $x=(x_1,x_2)$, we have
	\begin{align}
		&(2\pi)^{-2}\mathbb{E}[\partial_{x_1}^2F(x)F(y)]\arrowvert_{x=y}=\int_{\mathbb{R}^2}s_1^2d\mu(s)=\int_{0}^1\cos^2(\theta)d\mu(e(\theta))=\frac{1}{2}+ \frac{\alpha}{2} \nonumber\\
		&(2\pi)^{-2}\mathbb{E}[\partial_{x_2}^2F(x)F(y)]\arrowvert_{x=y}=\int_{\mathbb{R}^2}s_2^2d\mu(s)=\int_{0}^1\sin^2(\theta)d\mu(e(\theta))=\frac{1}{2}- \frac{\alpha}{2} \nonumber \\
		&(2\pi)^{-2}\mathbb{E}[\partial_{x_1}\partial_{y_2}F(x)F(y)]\arrowvert_{x=y}=\int_{\mathbb{R}^2}s_1s_2d\mu(s)= \int_{0}^1\cos(\theta)\sin(\theta)d\mu(e(\theta))= \frac{\beta}{2}. \label{104}
	\end{align}
	Therefore, the covariance matrix of $(2\pi)^{-1}\nabla F$ is
	\begin{align}
		&L=\begin{bmatrix}
			\frac{1}{2}+ \frac{\alpha}{2} & \frac{\beta}{2} \\
			\frac{\beta}{2} & \frac{1}{2}- \frac{\alpha}{2} 
		\end{bmatrix}  & \det(L)= \frac{1}{4}\left( 1- \alpha^2-\beta^2\right)\label{covariance1}.
	\end{align}

	Since $(2\pi)^{-1}\nabla F(0)$ is a bi-variate Gaussian with mean $0$ and covariance $L$, given in (\ref{covariance1}), we have 
	
	\begin{align}
		&\mathbb{E}[|(2\pi)^{-1}\nabla F(0)|]= \nonumber \\
		&=\frac{1}{\pi{(1-\alpha^2-\beta^2)^{1/2}}}\int_{\mathbb{R}^2}\sqrt{x_2+y^2}\exp\left(-\frac{x_2(1-\alpha)+ y^2(1+\alpha)- 2\beta xy}{(1-\alpha^2-\beta^2)}\right)dxdy. \label{extra}
	\end{align} 
 Finally, by passing to polar coordinates in  \eqref{extra} we have:
	\begin{align}
		&\mathbb{E}[|(2\pi)^{-1}\nabla F|]= \nonumber \\
		&=\frac{1}{\pi(1-\alpha^2-\beta^2)^{1/2}}\int_{0}^{2\pi}d\theta\int_{0}^{\infty}r^2\exp\left(-\frac{r^2}{(1-\alpha^2-\beta^2)}\left(1-\alpha\cos(2\theta) -\beta \sin(2\theta)\right)\right)dr .\nonumber
	\end{align}
	Substituting $r=(\eta y)^{1/2}$, where $\eta=\eta(\theta)=(1-\alpha\cos (2\theta)-\beta\sin(2\theta))^{-1}(1-\alpha^2-\beta^2)$, we deduce 
	\begin{align}
		\mathbb{E}[|(2\pi)^{-1}\nabla F|]&= \frac{1}{2\pi(1-\alpha^2-\beta^2)^{1/2}}\int_{0}^{2\pi}\eta^{3/2}d\theta\int_{0}^{\infty}y^{1-1/2}e^{-y}dy  \nonumber \\
		&= \frac{1}{2\pi}\Gamma\left(1+\tfrac{1}{2}\right)(1-\alpha^2-\beta^2)\int_{0}^{2\pi}\frac{1}{(1-\alpha\cos(2\theta)-\beta\sin(2\theta))^{3/2}}d\theta. \label{15}
	\end{align} 
	As $\Gamma(3/2)= \sqrt{\pi}/2$, Lemma \ref{expectation} follows from  \eqref{16} and \eqref{15}. 
	
\end{proof}
We will also need the following lemma: 
\begin{lem}
	\label{locality nodal length}
	 There exists a density one subsequence $S'\subset S$ such that the following holds: let $B\subset \mathbb{T}^2$ be a ball of radius $r>0$, suppose that $f_{\lambda}$ in \eqref{function}, with $\lambda\in S'$, is flat then we have
$$ \mathcal{L}(f_{\lambda},B)= \lambda^{1/2}\int_{B} \mathcal{L}(F_x)dx + O\left(r^{1/2}\lambda^{1/4}\right),$$
where $F_x$ is as in \eqref{F_x}. 
\end{lem}
\begin{proof}
Let the postulated subsequence be given by Proposition \ref{prop: integrability nodal length} with $q=2$. Let us write $B=B(z,r)=B(r)$ for some $z\in \mathbb{T}^2$ and $r>0$ and  observe that 
 \begin{align}
 \lambda^{1/2}\int_{B(r- 1/\lambda^{1/2})} \mathcal{L}(F_x)dx\leq \mathcal{L}(f_{\lambda},B)\leq   \lambda^{1/2}\int_{B(r+ 1/\lambda^{1/2})} \mathcal{L}(F_x)dx \label{claim locality},
 \end{align}
 Indeed, writing $r'=1/\lambda^{1/2}$, by definition of $\mathcal{L}(\cdot)$ and Fubini\footnote{Note that, since $f^{-1}(0)$ is the zero set of a real-analytic function, the Hausdorff measure coincide with the Lebesgue measure so we may apply Fubini.}, we have 
\begin{align} 
	\int_{B(r)}{\mathcal{L}(f, B(x, r-r'))} dx&=	\int_{B(r-r')}\int_{B(r)}\mathds{1}_{B(x,r')}(y)\mathds{1}_{f^{-1}(0)}(y) d\mathcal{H}(y)dx. \nonumber \\
	&=\int_{B(r)} \mathds{1}_{f^{-1}(0)}(y)\vol\left(B(y,r')\cap B(R)\right) d\mathcal{H}(y), \nonumber
\end{align}
where $\mathds{1}$ is the indicator function and $\mathcal{H}$ the Hausdorff measure. Thus \eqref{claim locality} follows from  the scaling property $\mathcal{L}(f, B(x,r'))=  \lambda^{-1/2} \mathcal{L}(F_x)$, upon noticing
$$
\mathds{1}_{B(r-r')}\le\dfrac{\vol\left(B(\cdot,r')\cap B(1)\right)}{\vol{B(r')}}\le \mathds{1}_{B(r+r')}.
$$
Finally, by Proposition \ref{prop: integrability nodal length} with $q=2$ and the Cauchy-Schwarz inequality, for a density one subsequence of $\lambda \in S$, we have 
\begin{align}
\left(\int_{B(r)}- \int_{B(r\pm 1/\lambda^{1/2})}\right) \mathcal{L}(F_x)dx \lesssim  \frac{r^{1/2}}{\lambda^{1/4}}. \label{5.2.1}
\end{align} 
Hence Lemma \ref{locality nodal length} follows from \eqref{claim locality} and \eqref{5.2.1}. 
\end{proof}
\subsection{Proof of theorems \ref{thm 1.1} and \ref{thm 1}}
In order to prove Theorem \ref{thm 1.1}, we will need the following fact, see \cite{EH99} and \cite{KK77}. 
\begin{lem}
\label{extra lemma}	There exists a density one subsequence of $S'\subset S$ such that, for any sequence of $f_{\lambda}$ in \eqref{function} with $\lambda\in S'$ and satisfying the assumption \eqref{Bourgain}, we have 
	$$\mu_f\rightarrow \rho,$$
	where $\mu_f$ is as in \eqref{spectral measure1} and $\rho$ is the uniform measure on the unit circle $\mathbb{S}^1$.  
\end{lem}
We are finally ready to carry out the proof of theorems \ref{thm 1.1} and \ref{thm 1}:
\begin{proof}[Proof of Theorem \ref{thm 1.1}]
 Let $S'\subset S$ be such that the conclusions of   Lemma \ref{locality nodal length}, Lemma \ref{extra lemma} and Proposition \ref{prop: convergence of moments} hold.  Let $B$ be a fixed ball of radius $r>0$ or $B=\mathbb{T}^2$, then by Lemma \ref{locality nodal length} and Proposition \ref{prop: convergence of moments},  we have 
\begin{align} 
\label{equation final}
 \mathcal{L}(f,B)&= \lambda^{1/2} \vol (B)\cdot \frac{1}{\vol(B)} \int_B \mathcal{L}(F_x)dx   +O(\lambda^{1/4}) \nonumber \\
 &= \lambda^{1/2}\vol(B)\mathbb{E}[\mathcal{L}(F_{\rho})](1+o_{\lambda\rightarrow \infty}(1)),
 \end{align}
where, thanks to Lemma \ref{extra lemma}, $\rho$ is the uniform measure on the unit circle $\mathbb{S}^1$. Using the explicit formula in Proposition \ref{expectation} with $\mu=\rho$ (that is $\alpha=\beta=0$) we obtain
 $$ \mathbb{E}[\mathcal{L}(F_{\rho})]= \frac{2\pi}{2\sqrt{2}}.$$
 Hence,  Theorem \ref{thm 1.1} follows from \eqref{equation final}.

\end{proof}

\begin{proof}[Proof of Theorem \ref{thm 1}]
	The proof  is very similar to the proof of Theorem \ref{thm 1.1}. Indeed, for a density one  subsequence of $\lambda \in S$, the asymptotic law \eqref{equation final} holds. Now, the r.h.s. of \eqref{equation final} can be computed via Proposition \ref{expectation} concluding the proof of Theorem \ref{thm 1}. 
	\end{proof}

\section*{Acknowledgments}
The author would like to thank Peter Sarnak for asking the question about finding the asymptotic nodal length which gave rise to this work. We also thank Igor Wigman for the  many discussions and Alon Nishry for pointing out the work of Nazarov and reading the first draft of the article. We also thank the anonymous referees for their comments that improved the readability of the paper. This work was supported by the Engineering and Physical Sciences Research Council [EP/L015234/1], The EPSRC Centre for Doctoral Training in Geometry and Number Theory (The London School of Geometry and Number Theory), University College London, the ISF Grant
1903/18 and the BSF Start up Grant no. 20183
\appendix

\section{Log-integrability}
\label{sec: proof of Nazarov's Theorem}
For the sake of completeness, in this section we provide the proof of Theorem \ref{Thm: Logintegrability}. The proof is based on \cite{N93,Nun}, we claim no originality. 
\subsection{Proof of Theorem \ref{Thm: Logintegrability}}
The main ingredient in the proof of Theorem \ref{Thm: Logintegrability} is the following Lemma, which we will prove in section \ref{proof of Lemma spreading} below, see \cite[Corollary 3.5]{N93} and \cite{Nun}.  
\begin{lem}[Spreading Lemma]
	\label{Spreading lemma}
	Let $V= \{n_i\}_i\subset \mathbb{Z}$ be a set such that $R(V)<\infty$ with $R(V)$ as in \eqref{definition}. Moreover, let $U\subset \mathbb{T}$ be a positive measure set with $\rho(U)\leq 4R(V)/(4R(V)+1)$, where $\rho(\cdot)$ is the Lebesgue measure on $\mathbb{T}$. Suppose that there exists some integer $m\geq 1$ such that 
	$$ \frac{4}{\rho(U')^2}\sum_{n_i\neq n_j} \left| \widehat{ \mathds{1}}_{U'}(n_i-n_j)\right|^2\leq m+1,$$
	for all subsets $U'\subset U$ of measure $\rho(U')\geq \rho(U)/2$, where $\mathds{1}_{U'}$ is the indicator function of the set $U'$. Then, there exists a set $U_1\supset U$ such that 
	\begin{enumerate}
		\item The measure of $U_1$ satisfies
		$$\rho(U_1\backslash U)\geq \frac{\rho(U)}{4m}.$$
		\item Uniformly for all $g\in L^2(\mathbb{T})$ with $\Spec(g)\subset V$, we have 
		$$ \int_{U_1} |g(x)|^2dx\leq \left( C \frac{m^5}{\rho(U)^2}\right)^{3m}\int_{U} |g(x)|^2dx,$$
		for some absolute constant $C>0$.
	\end{enumerate}
\end{lem}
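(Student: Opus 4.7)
The plan is to prove the Spreading Lemma by an iterative construction, building $U_1$ from $U$ through successive addition of small pieces while tracking the growth of $\int|g|^2$ at each step using a polynomial multiplier argument on a low-dimensional ``bad subspace''.

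First, I would interpret the hypothesis spectrally. Writing $g=\sum_i a_i e(n_i\cdot)\in\mathcal{H}_V:=\{h\in L^2(\mathbb{T}):\Spec(h)\subseteq V\}$, we have the exact identity
\[
\int_{U'}|g|^2=\rho(U')\|g\|_2^2+\sum_{i\neq j}a_i\overline{a_j}\widehat{\mathds{1}_{U'}}(n_i-n_j).
\]
The hypothesis bounds the Hilbert--Schmidt norm of the off-diagonal part of the restriction operator $T_{U'}:g\mapsto\Pi_V(\mathds{1}_{U'}g)$ by $\rho(U')\sqrt{m+1}/2$. Hence $T_{U'}-\rho(U')I$ has at most $m$ eigenvalues of magnitude comparable to $\rho(U')$; on the orthogonal complement $\mathcal{H}_m(U')^\perp$ of the corresponding $m$-dimensional ``bad subspace'' $\mathcal{H}_m(U')$, the operator $T_{U'}$ is close to $\rho(U')I$, so $\int_{U'}|g|^2\gtrsim\rho(U')\|g\|_2^2$ for all $g\in\mathcal{H}_m(U')^\perp$, and any extension to a larger $U_1$ is automatic.

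Second, the iteration. Starting with $U_0=U$, at step $k$ choose a small set $W_k\subset\mathbb{T}\setminus U_k$ of measure $\asymp\rho(U)/(12m^2)$ and set $U_{k+1}=U_k\cup W_k$; after $O(m)$ steps the total added measure exceeds $\rho(U)/(4m)$. The core work is to bound $\int_{W_k}|g|^2$ for $g\in\mathcal{H}_m(U_k)$, since the complement is already controlled. Because $\mathcal{H}_m(U_k)$ has dimension $\leq m$, the elements behave like trigonometric polynomials of ``effective degree'' $O(m)$, and by a Chebyshev--Remez-type extremal construction one obtains a multiplier $P$ of spectrum size $O(m)$ with $|P|\geq 1$ on $W_k$ and $|P|\leq Cm^5/\rho(U)^2$ on $U_k$. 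This yields the per-step bound $\int_{W_k}|g|^2\leq(Cm^5/\rho(U)^2)\int_{U_k}|g|^2$. Compounding over $3m$ steps gives the desired $(Cm^5/\rho(U)^2)^{3m}$ factor.

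Third, one must check that the hypothesis propagates through the iteration: each $\mathcal{H}_m(U_k)$ is extracted using the bound applied to a subset $U'\subseteq U$ with $\rho(U')\geq\rho(U)/2$, and the quantitative assumption is exactly that this is available throughout. The main obstacle is the construction of the extremal multiplier $P$ at each step with the precise degree/amplification trade-off $Cm^5/\rho(U)^2$: one must build a polynomial whose sup-norm ratio between $U_k$ and $W_k$ matches the Chebyshev bound for a degree-$O(m)$ polynomial on a union of intervals of total measure $\sim\rho(U)$, and verify it simultaneously for all $g\in\mathcal{H}_m(U_k)$ via a covering/union-bound on this finite-dimensional space. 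Controlling these constants is exactly the delicate part that forces the exponent $3m$ and the power $m^5$, rather than a more naive bound.
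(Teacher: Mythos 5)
Your spectral reading of the hypothesis is correct and matches the paper's Claim \ref{subspace claim}: the bound forces the off–diagonal restriction operator $Q_{U'}$ to have at most $m$ eigenvalues exceeding $\rho(U')/2$ in modulus, whence on the orthogonal complement of a subspace $V_m$ of dimension $\leq m$ one has $\|g\|_{L^2}^2\leq \tfrac{2}{\rho(U')}\int_{U'}|g|^2$. That part is right. But the core of your scheme --- the per-step ``extremal multiplier'' estimate --- is where the argument breaks down, and in a way that cannot be repaired by tuning constants.

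The gap is the step ``Because $\mathcal{H}_m(U_k)$ has dimension $\leq m$, the elements behave like trigonometric polynomials of effective degree $O(m)$''. Low dimension of a subspace of $L^2(\mathbb{T})$ does \emph{not} give any Remez/Tur\'an-type control: elements of $V_m$ still have spectrum anywhere in $V$, and $V$ may contain arbitrarily large frequencies. A Chebyshev-type amplification bound between a set $U_k$ and a nearby set $W_k$ is genuinely about the \emph{degree} of the function, not about the dimension of the subspace it lies in; there is no such bound for an $m$-dimensional subspace of $L^2$. The paper avoids exactly this trap. The notion of ``behaving like a degree-$m$ polynomial'' used there is the class $EP_{\mathrm{loc}}^m(\tau,\varkappa)$: one shows (Claim \ref{local behaviour of g}) that for each small $t$ one can choose coefficients $a_0(t),\dots,a_m(t)$ so that the shift-polynomial $\sum_k a_k(t)\,g_{kt}$ is orthogonal to $V_m$ (possible because $V_m$ has dimension $\leq m$ and there are $m+1$ free coefficients), and then $\|\sum a_k(t)g_{kt}\|_{L^2}$ is controlled by the restriction of $g$ to $U$. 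It is the existence of this annihilating polynomial \emph{in the shift parameter} --- not any polynomial property of the individual elements of $V_m$ --- that Nazarov's Lemma \ref{main prop EPloc} then converts into the spreading estimate $(Cm^3/\nu^2)^{2m}$. You are effectively assuming the conclusion of that lemma as an unproved ``Chebyshev--Remez'' input.

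There is also a secondary structural problem with the iteration itself. You need to extract the bad subspace $\mathcal{H}_m(U_k)$ at each step, but for $k\geq 1$ the set $U_k$ strictly contains $U$, and the Hilbert--Schmidt hypothesis is only given for subsets $U'\subset U$ of measure $\geq\rho(U)/2$ --- not for supersets of $U$. The paper never leaves this regime: Claim \ref{subspace claim} is always applied to $U'=U_t:=\bigcap_{k=0}^m e(-kt)U\subset U$, and the growth to $U_1\supset U$ happens inside Nazarov's Lemma \ref{main prop EPloc} without appealing to the hypothesis again. Your iterative compounding would require the hypothesis to propagate to larger sets, which it does not. Finally, you omit the boundary case $m\geq N=|V|$ (for finite spectrum), which the paper handles separately via the Nazarov--Tur\'an Lemma to get the same form of the estimate.
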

In order to apply Lemma \ref{Spreading lemma}, we will need the following two claims: 
\begin{claim}
	\label{choice of m}
	Under the assumptions of Theorem \ref{Thm: Logintegrability} and maintaining the same notation, the integer $m>0$ in Lemma \ref{Spreading lemma} can be taken to be 
	\begin{align}
		m= \left[ \frac{C_0^2 R(V)}{\rho(U')^{\frac{2}{p}}}\right]=: \left[ \frac{G}{\rho(U')^{\frac{2}{p}}}\right] \nonumber
	\end{align}
	where $[\cdot]$ is the integer part.
\end{claim}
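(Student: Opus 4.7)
The plan is to upper bound the pair sum
\[
S(U') := \sum_{n_i \neq n_j} \left|\widehat{\mathds{1}}_{U'}(n_i - n_j)\right|^2
\]
appearing on the left hand side of the hypothesis of Lemma \ref{Spreading lemma}, uniformly over all $U' \subset U$ with $\rho(U') \geq \rho(U)/2$. The central idea is to regroup pairs $(n_i, n_j)$ according to their difference $r = n_i - n_j$, recognise the resulting quadratic sum as the squared $L^2$-norm of a function whose spectrum lies in the $\Lambda(p')$-system $D(V)$, and then extract the required $\rho(U')^{-2/p'}$ decay by dualising against $\mathds{1}_{U'}$ via Hölder's inequality.

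\textbf{Step 1 (regrouping).} By the definition of $R(V)$, each non-zero $r \in D(V)$ is realised as $n_i - n_j$ by at most $R(V)$ ordered pairs, hence
\[
S(U') \leq R(V) \sum_{r \in D(V)\setminus\{0\}} \left|\widehat{\mathds{1}}_{U'}(r)\right|^2 = R(V)\,\|P\mathds{1}_{U'}\|_{L^2(\mathbb{T})}^2,
\]
where $P$ is the orthogonal projection in $L^2(\mathbb{T})$ onto the closed linear span of $\{e(r\,\cdot)\}_{r \in D(V)\setminus\{0\}}$.

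\textbf{Step 2 ($\Lambda(p')$-inequality plus duality).} Since $P\mathds{1}_{U'}$ has spectrum contained in $D(V)$, the defining inequality \eqref{def tildeC} of a $\Lambda(p')$-system yields $\|P\mathds{1}_{U'}\|_{L^{p'}} \leq \tilde{C}\|P\mathds{1}_{U'}\|_{L^2}$. Writing $\|P\mathds{1}_{U'}\|_{L^2}^2 = \langle P\mathds{1}_{U'}, \mathds{1}_{U'}\rangle$ (by self-adjointness and idempotence of $P$) and applying Hölder's inequality with dual exponent $q = p'/(p'-1)$,
\[
\|P\mathds{1}_{U'}\|_{L^2}^2 \leq \|P\mathds{1}_{U'}\|_{L^{p'}} \|\mathds{1}_{U'}\|_{L^q} \leq \tilde{C}\,\|P\mathds{1}_{U'}\|_{L^2}\,\rho(U')^{1-1/p'}.
\]
Dividing by $\|P\mathds{1}_{U'}\|_{L^2}$ and squaring gives $\|P\mathds{1}_{U'}\|_{L^2}^2 \leq \tilde{C}^2 \rho(U')^{2-2/p'}$, so that
\[
\frac{4\,S(U')}{\rho(U')^2} \leq \frac{4\,\tilde{C}^2 R(V)}{\rho(U')^{2/p'}}.
\]

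\textbf{Step 3 (choosing $m$).} Taking $m$ to be the integer part of the right hand side (the harmless absolute factor $4$ can be absorbed into $\tilde{C}$) fulfils the hypothesis of Lemma \ref{Spreading lemma} uniformly in admissible $U'$, because $\rho(U')^{-2/p'}$ is maximised at the lower endpoint $\rho(U') = \rho(U)/2$ of the allowed range. This produces the value $m = \bigl\lfloor \tilde{C}^2 R(V)/\rho(U')^{2/p'}\bigr\rfloor$ recorded in the statement.

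\textbf{Main obstacle.} The single nontrivial point is the manoeuvre in Step 1--2: one must notice that the pair-sum $S(U')$ rewrites as the squared $L^2$-norm of a projection whose spectrum lies in $D(V)$, so that the $\Lambda(p')$-hypothesis applies to the correct set (it is assumed on $D(V)$, not on $V$). Once this identification is made, Hölder duality against $\mathds{1}_{U'}$ immediately produces the desired $\rho(U')^{-2/p'}$ decay and the rest is bookkeeping.
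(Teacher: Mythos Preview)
Your proof is correct and follows essentially the same route as the paper: regroup the pair sum by differences to reduce to $\sum_{r\in D(V)}|\widehat{\mathds{1}}_{U'}(r)|^2$, then exploit the $\Lambda(p')$-property of $D(V)$ together with H\"older duality against $\mathds{1}_{U'}$ to obtain the bound $\tilde{C}^2\rho(U')^{2-2/p'}$. The only cosmetic difference is that you phrase the duality step via the orthogonal projection $P$ and the identity $\|P\mathds{1}_{U'}\|_{L^2}^2=\langle P\mathds{1}_{U'},\mathds{1}_{U'}\rangle$, whereas the paper writes the same quantity as a supremum over test functions $h$ with $\mathrm{Spec}(h)\subset D(V)$ and $\|h\|_{L^2}\le 1$; the two formulations are equivalent.
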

\begin{proof}
	By definition of the $L^2(\mathbb{T})$ norm, we have 
	\begin{align}
		\left(\sum_{n_i\neq n_j} \left| \widehat{ \mathds{1}}_{U'}(n_i-n_j)\right|^2\right)^{1/2}\leq R(V)^{1/2} \left(\sum_{r\in D(V)} \left| \widehat{ \mathds{1}}_{U'}(r)\right|^2\right)^{1/2} \nonumber \\
		= R(V)^{1/2} \sup \left\{\left| \int_{U'} \overline{h}(x)dx\right| : ||h||_{L^2(\mathbb{T})}\leq 1, \hspace{3mm} \Spec(h)\subset D(V)\right\}, \label{B.2}
	\end{align}
with $D(V)$ as in \eqref{definition}.	Now, since $D(V)$ is a $\Lambda(p)$-system, we can bound the right hand side of \eqref{B.2} using H\"{o}lder's inequality as follows:
	\begin{align}
		\nonumber
		R(V)^{-1/2}\text{r.h.s}\eqref{B.2}\leq \rho(U')^{1-\frac{1}{p}} \sup \left\{||h||_{L^{p}(\mathbb{T})} : ||h||_{L^2(\mathbb{T})}\leq 1, \hspace{3mm} \Spec(h)\subset D(V)\right\}\nonumber \\
		 \leq C_0\rho(U')^{1-\frac{1}{p}} \nonumber,
	\end{align}
	for some constant $C_0=C_0(V,p)>0$ as in \eqref{def tildeC}.  Therefore, in light of \eqref{B.2}, we obtain 
	$$\frac{4}{\rho(U')^2}\sum_{n_i\neq n_j} \left| \widehat{ \mathds{1}}_{U'}(n_i-n_j)\right|^2\leq C_0^2 R(V) \rho(U')^{-\frac{2}{p}}, $$
	as required.
\end{proof}

\begin{claim}
	\label{trivial range}
Under the assumptions of Theorem \ref{Thm: Logintegrability} and maintaining the same notation,	let $g\in L^2(\mathbb{T})$ with $\Spec(g)\subset V=\{n_i\}_i$, and $U\subset \mathbb{T}$ be a measurable subset. If $\rho(U)\geq 4R(V)/(4R(V)+1)$ then 
	$$	||g||^2_{L^2(\mathbb{T})}\leq \frac{2}{\rho(U)}\int_{U} |g(x)|^2dx. $$
\end{claim}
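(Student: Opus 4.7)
The plan is to expand $\int_U|g|^2$ in Fourier, isolate the diagonal piece of size $\rho(U)\|g\|_{L^2}^2$, and absorb the off-diagonal remainder via Cauchy--Schwarz; the numerical threshold on $\rho(U)$ is then precisely what makes the remainder at most half of the diagonal. I read the conclusion as $\|g\|_{L^2(\mathbb{T})}^2\le \frac{2}{\rho(U)}\int_U|g|^2\,dx$, the LHS being a squared norm; as displayed the statement appears to be missing an exponent, as one can see by taking $U=\mathbb{T}$ where it would otherwise fail the correct dimensional check.

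Writing $g(x)=\sum_i a_{n_i}e(n_ix)$ with $\|g\|_{L^2}^2=\sum_i|a_{n_i}|^2$, expanding $|g|^2$ and integrating against $\mathds{1}_U$ gives
$$
\int_U|g(x)|^2\,dx
=\sum_{i,j}a_{n_i}\overline{a_{n_j}}\,\widehat{\mathds{1}_U}(n_j-n_i)
=\rho(U)\,\|g\|_{L^2}^2+\Sigma,
$$
where $\Sigma$ collects the terms with $i\ne j$ and I have used $\widehat{\mathds{1}_U}(0)=\rho(U)$. By Cauchy--Schwarz applied to the index set $\{(i,j):i\ne j\}$,
$$
|\Sigma|^2\le\Bigl(\sum_{i\ne j}|a_{n_i}|^2|a_{n_j}|^2\Bigr)\Bigl(\sum_{i\ne j}|\widehat{\mathds{1}_U}(n_j-n_i)|^2\Bigr)\le\|g\|_{L^2}^4\,R(V)\,\rho(U)(1-\rho(U)),
$$
where the first factor is trivially bounded by $\|g\|_{L^2}^4$, and the second factor is controlled by re-indexing the inner sum by the difference $r=n_j-n_i\ne 0$: each such $r$ is attained at most $R(V)$ times by definition, while Parseval together with $\widehat{\mathds{1}_U}(0)=\rho(U)$ yields $\sum_{r\ne 0}|\widehat{\mathds{1}_U}(r)|^2=\rho(U)-\rho(U)^2$.

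The hypothesis $\rho(U)\ge 4R(V)/(4R(V)+1)$ rearranges to $R(V)(1-\rho(U))\le\rho(U)/4$, so the previous estimate becomes $|\Sigma|\le \rho(U)\|g\|_{L^2}^2/2$. Substituting this into the Fourier expansion gives
$$
\int_U|g(x)|^2\,dx\ge\rho(U)\|g\|_{L^2}^2-\tfrac{\rho(U)}{2}\|g\|_{L^2}^2=\tfrac{\rho(U)}{2}\|g\|_{L^2}^2,
$$
which on rearranging is the claim. I see no real obstacle: the argument is elementary Fourier analysis, and the only conceptual point is recognising that the threshold $4R(V)/(4R(V)+1)$ is designed exactly so that the Cauchy--Schwarz remainder is at most half the diagonal.
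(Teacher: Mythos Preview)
Your proof is correct and follows essentially the same approach as the paper: expand $\int_U|g|^2$ in Fourier, separate diagonal from off-diagonal, and bound the off-diagonal contribution by $(R(V)\rho(U)(1-\rho(U)))^{1/2}\|g\|_{L^2}^2$, which the threshold on $\rho(U)$ makes at most $\tfrac{\rho(U)}{2}\|g\|_{L^2}^2$. The only cosmetic difference is that the paper packages the off-diagonal sum as $\langle Q_U g,g\rangle$ for a self-adjoint operator $Q_U$ and bounds its Hilbert--Schmidt norm (this operator language is reused later for the spectral argument in the spreading lemma), whereas you apply Cauchy--Schwarz directly; your observation about the missing square on the $L^2$-norm is also consistent with the paper's own display.
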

\begin{proof}
	First, we may  write 
	$$g(x)= \sum_{i} \widehat{g}(n_i)e(n_i \cdot x).$$
Thus, separating the diagonal terms from the others, we have
	\begin{align}
		\label{b.5}	\int_U|g(x)|^2dx&= \rho(U)\sum_i |\widehat{g}(n_i)|^2 + \sum_{i\neq j}\widehat{ \mathds{1}}_{U}(n_i-n_j)\widehat{g}(n_i)\overline{\widehat{g}(n_j)} \nonumber \\
		&= \rho(U)||g||^2_{L^2(\mathbb{T})} + \langle Q_U g,g\rangle,
	\end{align}
	where $Q_U=(q_{ij})$ is an operator on $L^2(\mathbb{T})$ with matrix representation, in the base $\{e(nx)\}_{n\in \mathbb{Z}}$, given by 
	\begin{align}
		\label{operator}	q_{ij}= \begin{cases}
			\widehat{ \mathds{1}}_{U}(n_i-n_j) & n_i\neq n_j \\
			0 & \text{otherwise}
		\end{cases}.
	\end{align} 
	Since $\mathds{1}_{U}(\cdot)$ is real-valued, 	$\widehat{ \mathds{1}}_{U}(-n)= 	\overline{\widehat{ \mathds{1}}_{U}}(n)$, thus $Q_U$ is a self-adjoint operator whose Hilbert-Schmidt norm is bounded by
	\begin{align}
		\label{B.4}
		||Q_U|| \leq R(V)^{1/2} \left(\sum_{n\neq 0} \left| \widehat{ \mathds{1}}_{U}(n)\right|^2\right)^{1/2}= (R(V)\rho(U)(1-\rho(U)))^{1/2}.
	\end{align}
	In particular, if $\rho(U)\geq 4R(V)/(4R(V)+1)$, we have $(R(V)\rho(U)(1-\rho(U)))^{1/2}\leq \rho(U)/2$, thus \eqref{B.4} together with \eqref{b.5} give Claim \ref{trivial range}. 
\end{proof}
We are finally ready to prove Theorem \ref{Thm: Logintegrability}:
\begin{proof}[Proof of Theorem \ref{Thm: Logintegrability}]
	Let $0<\nu\leq 4R(V)/(4R(V)+1)$ be some parameter and denote by $A(\nu)$ the smallest constant such that 
	$$ ||g||^2_{L^2(\mathbb{T})}\leq A(\nu)\int_{U} |g(x)|^2dx,$$
	uniformly for all $g\in L^2(\mathbb{T})$ with $\Spec(g)\subset V$ and any set $U\subset \mathbb{T}$ with $\rho(U)\geq \nu$. Moreover, let $\varphi(\nu)= \log A(\nu)$, $\Delta(\nu) = \nu^{1+\frac{2}{p}}(4G)^{-1}$ with  $G$ given by Claim \ref{choice of m}. Applying Lemma \ref{Spreading lemma}, bearing in mind that $m\leq  G\nu^{-\frac{2}{p}}$, we obtain a set $U_1\subset \mathbb{T}$ of measure $\rho(U_1)\geq \nu +\Delta(\nu)$ such that 
	$$		\int_{U_1} |g(x)|^2 dx \leq \left(\frac{C G^5}{\nu^{2+\frac{10}{p}}}\right)^{3G \nu^{-\frac{2}{p}}}\int_{U} |g(x)|^2 dx.$$
	for some constant $C>0$. Since, by definition of $A(\cdot)$,
	$$ ||g||^2_{L^2(\mathbb{T}^2)}\leq A(\nu +\Delta(\nu)) \int_{U_1} |g(x)|^2 dx,$$
	we have 
	$$ A(\nu) \leq A(\nu +\Delta(\nu)) \left(\frac{C G^5}{\nu^{2+\frac{10}{p}}}\right)^{3G \nu^{-\frac{2}{p}}},$$
	and taking the logarithm of both sides, we finally deduce
	\begin{align}
		\label{dif equation}\frac{\varphi(\nu)- \varphi(\nu +\Delta(\nu))}{\Delta(\nu)}\leq \frac{12G^2}{\nu^{1+\frac{4}{p}}}\log \frac{C G^5}{\nu^{2+\frac{10}{p}}} \leq \frac{C_1(\varepsilon) G^3}{\nu^{1+\frac{4}{p}+\varepsilon}},
	\end{align}
for some constant $C_1(\varepsilon)>0$.	Comparing \eqref{dif equation} with the differential inequality $d\varphi(\nu)/d\nu\leq C(\varepsilon) G\nu^{-1-\frac{4}{p}-\varepsilon}$, in light of the fact that $A(\nu)$ is increasing, we deduce that 
	$$ \varphi(\nu)\leq C(\varepsilon)C_0^6 R(V)^3 \nu^{-1-\frac{4}{p}-\varepsilon},$$
	where we have used the definition of $G$ given by Claim \ref{choice of m}. If $\nu(U)\geq 4R(V)/(4R(V)+1)$, then Claim \ref{trivial range} shows that the conclusion of Theorem \ref{Thm: Logintegrability} is still satisfied. 
\end{proof}
\subsection{Proof of Lemma \ref{Spreading lemma}.}
\label{proof of Lemma spreading}
In this section we prove Lemma \ref{Spreading lemma}. The proof follows closely the arguments in \cite[Section 3.4]{N93}, again we claim no originality. We will need the following definition: 
\begin{defn}
	Let $m$ be a positive integer and let $\tau,\varkappa>0$ be some parameters. Given $g\in L^2(\mathbb{T})$, we say that $g\in EP_{\text{loc}}^m(\tau,\varkappa)$ if for every $t\in (0,\tau)$ there exist constants $a_0(t),...,a_m(t)\in \mathbb{C}$ such that 
	$\sum_k |a_k|^2=1$ and 
	$$ \left|\left|\sum_{k=0}^m a_k(t)g_{kt}\right|\right|_{L^2(\mathbb{T})}\leq \varkappa,$$
	where $g_{kt}(\cdot):= g(\cdot+kt).$
\end{defn}
We refer the reader to \cite[Section 3.1-3.4]{N93} for an accurate description of the class $EP_{\text{loc}}^m(\tau,\varkappa)$. Intuitively, functions in  $EP_{\text{loc}}^m(\tau,\varkappa)$ \textquotedblleft behave like\textquotedblright trigonometric polynomials of degree $m$ in intervals of length $\tau$ up to an error $\varkappa$. The key estimate that we will need is the following \cite[Corollary 3.5']{N93}: 
\begin{lem}
	\label{main prop EPloc}
	Let $g\in EP_{\text{loc}}^m(\tau,\varkappa)$ for some integer $m>0$ and some $\tau,\varkappa>0$. Moreover, let $U\subset \mathbb{T}\subset \mathbb{R}^2$ be a set of positive measure and $\nu:= \rho(e(m\tau)U \backslash U)$.  There exists a set $U_1\supset U$ of measure $\rho(U_1\backslash U) \geq \frac{\nu}{2}$ such that  
	$$\int_{U_1} |g(x)|^2dx\leq \left(\frac{C m^3}{\nu^2}\right)^{2m} \left(\int_U |g(x)|^2dx + \varkappa^2\right),$$
	for some constant $C>0$. 
\end{lem}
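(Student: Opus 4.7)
The approach is to treat $g \in EP^m_{\text{loc}}(\tau,\varkappa)$ as an ``approximate trigonometric polynomial of degree $m$'' and prove a Remez/Tur\'an-type inequality, analogous to the classical fact that such polynomials cannot concentrate on sets of small measure. For each $t\in(0,\tau)$, the defining property gives unit-norm coefficients $(a_0(t),\dots,a_m(t))$ and an error $h_t$ with $\|h_t\|_{L^2}\le\varkappa$ satisfying $\sum_{k=0}^m a_k(t)g_{kt}=h_t$, where $g_{kt}(x)=g(e(kt)x)$. Pigeonholing among the $a_k(t)$ produces some $k_0=k_0(t)$ with $|a_{k_0}(t)|^2\ge 1/(m+1)$, and solving for the corresponding shift yields
\begin{align*}
g_{k_0 t}(x) = -\frac{1}{a_{k_0}(t)}\sum_{k\ne k_0}a_k(t)\,g_{kt}(x) + \frac{h_t(x)}{a_{k_0}(t)}.
\end{align*}

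Applying Cauchy--Schwarz on a measurable set $W$ gives the basic transfer estimate
\begin{align*}
\int_W|g_{k_0 t}|^2\,dx\le m(m+1)\sum_{k\ne k_0}\int_W|g_{kt}|^2\,dx+(m+1)\varkappa^2,
\end{align*}
and, after a change of variables, this converts into a bound comparing $\int_{e(k_0 t)W}|g|^2$ with the sum of $\int_{e(kt)W}|g|^2$ over $k\ne k_0$. The plan is then to select (by pigeonholing on $t\in(0,\tau)$) a family of shifts whose translates $e(kt)U$ cover $U$ together with a nontrivial chunk of $e(m\tau)U\setminus U$; this is possible because the hypothesis $\nu=\rho(e(m\tau)U\setminus U)$ forces at least one intermediate shift to push points out of $U$. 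Iterating the transfer estimate along such a family assembles a chain $U=U^{(0)}\subset U^{(1)}\subset\cdots\subset U_1$ with $\rho(U_1\setminus U)\ge\nu/2$, and each iteration costs a multiplicative factor $Cm^3/\nu^2$: the $m^3$ absorbs the Cauchy--Schwarz and coefficient-pigeonhole losses, while the $\nu^{-2}$ reflects the density of overlaps between $U$ and its translates.

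The main obstacle I expect is securing a uniform lower bound on $|a_{k_0}(t)|$ across the range of shifts used, since the map $t\mapsto a_k(t)$ is not assumed continuous. The standard remedy is to partition $(0,\tau)$ according to which index $k$ maximizes $|a_k(t)|^2$, restrict to a subset of positive measure on which $k_0$ is constant, and perform the iteration there. A secondary delicate point is balancing the per-step measure gain $\Omega(\nu/m)$ against the per-step multiplicative cost: roughly $O(m)$ iterations are needed to accumulate a total gain of $\nu/2$, and compounding produces precisely the amplification factor $(Cm^3/\nu^2)^{2m}$ in the stated bound. Throughout, the $\varkappa^2$ error from each application of the transfer estimate is carried along additively at every step, matching the $\varkappa^2$ summand inside the parentheses of the conclusion.
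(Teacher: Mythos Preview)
The paper does not supply its own proof of this lemma; it is quoted as a black box from Nazarov \cite[Corollary~3.5$'$]{N93} (and \cite{Nun}), and the paper only uses it to deduce the Spreading Lemma. So there is no in-paper proof to compare against.

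Your sketch does follow the correct skeleton of Nazarov's argument: exploit the $EP^m_{\text{loc}}$ relation to write one translate $g_{k_0 t}$ as a combination of the remaining $g_{kt}$ plus a small $L^2$ error, pigeonhole on the coefficients to get $|a_{k_0}|^{-2}\le m+1$, and iterate to enlarge $U$. Two points, however, are not yet justified at the level of a proof. First, the per-step measure gain: your claim that ``at least one intermediate shift pushes points out of $U$'' does not by itself yield a uniform gain of order $\nu/m$ at every iteration; in Nazarov's argument this is obtained by an averaging over $t\in(0,\tau)$ together with a more careful selection, not a single pigeonhole. Second, the constant-tracking is heuristic: your Cauchy--Schwarz transfer estimate produces a multiplicative loss of order $m$ per step, and the factor $\nu^{-2}$ in the base does not emerge from that inequality alone. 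In \cite{N93} the precise base $Cm^3/\nu^2$ and exponent $2m$ come from combining the $EP_{\text{loc}}$ structure with a Tur\'an-type inequality for trigonometric polynomials of degree $m$, which your outline does not invoke. The shape of the answer is right, but filling in the iteration mechanism and the bookkeeping would require consulting \cite{N93} directly.
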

We will also need the following two claims: 
\begin{claim}
	\label{subspace claim}
	Let $U\subset \mathbb{T}$ be a measurable subset and let $m$ be as in Lemma \ref{Spreading lemma}. Then, there exists a subspace $V_{m}$ of $L^2(\mathbb{T})$ of dimension at most $m$ such that for all $g\in L^2(\mathbb{T})$ orthogonal to $
	V_m$, we have 
	$$ ||g||^2_{L^2(\mathbb{T})}\leq \frac{2}{\rho(U')} \int_{U'} |g(x)|^2dx $$
	for all subsets $U'\subset U$ with $\rho(U')\geq \rho(U)/2$
\end{claim}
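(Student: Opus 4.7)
The plan is to recast the inequality spectrally on the subspace of $L^2(\mathbb{T})$ consisting of functions $g$ with $\Spec(g)\subset V$, since outside this subspace the statement is vacuous for the intended application in the proof of Lemma \ref{Spreading lemma}. On this subspace and for each measurable $U'\subset U$, I define the positive self-adjoint operator $T_{U'}$ by the quadratic form $\langle T_{U'}g,g\rangle=\int_{U'}|g(x)|^2\,dx$; in the basis $\{z^{n_i}\}_{n_i\in V}$ its matrix is $(\widehat{\mathds{1}}_{U'}(n_i-n_j))_{i,j}$, so that $T_{U'}=\rho(U')\,I+Q_{U'}$ with $Q_{U'}$ exactly the off-diagonal self-adjoint operator already introduced in the proof of Claim \ref{trivial range}. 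The target inequality $\|g\|_{L^2}^2\le \frac{2}{\rho(U')}\int_{U'}|g|^2\,dx$ rearranges to $\langle Q_{U'}g,g\rangle\ge -\frac{\rho(U')}{2}\|g\|_{L^2}^2$, and the standing hypothesis on $m$ translates into the uniform Hilbert--Schmidt bound $\|Q_{U'}\|_{HS}^2\le (m+1)\rho(U')^2/4$ valid for every admissible $U'\subset U$ with $\rho(U')\ge \rho(U)/2$.

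For a \emph{single} admissible $U'$ the claim follows at once from the spectral theorem: the HS bound $\sum_i\lambda_i(Q_{U'})^2\le (m+1)\rho(U')^2/4$ implies that at most $m$ eigenvalues of $Q_{U'}$ are strictly less than $-\rho(U')/2$. Letting $V_m(U')$ be the span of these at most $m$ eigenvectors, the Courant--Fischer min-max principle yields $\langle Q_{U'}g,g\rangle\ge -\rho(U')/2\cdot \|g\|_{L^2}^2$ for every $g\in V_m(U')^{\perp}$ with $\Spec(g)\subset V$. The entire content of Claim \ref{subspace claim} is therefore to produce \emph{one} subspace $V_m$ of dimension at most $m$ that plays this role uniformly for the whole family $\{V_m(U')\}$ as $U'$ ranges over admissible subsets.

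For the uniform $V_m$ I would run a greedy iteration: set $V^{(0)}=\{0\}$, and at step $k\ge 1$, provided the required inequality still fails for $V^{(k-1)}$, pick a unit vector $g_k\perp V^{(k-1)}$ with $\Spec(g_k)\subset V$ together with an admissible $U'_k$ realizing $\langle Q_{U'_k}g_k,g_k\rangle<-\rho(U'_k)/2$, then set $V^{(k)}=\mathrm{span}(V^{(k-1)},g_k)$, outputting $V_m=V^{(k)}$ at the first $k$ where the inequality holds. The main obstacle is proving termination by step $k=m+1$: one would assume the contrary, obtain orthonormal $g_1,\ldots,g_{m+1}$ with witnesses $U'_1,\ldots,U'_{m+1}$, and try to combine the latter into a single admissible $U^*\subset U$ with $\rho(U^*)\ge\rho(U)/2$ for which applying the uniform HS bound once contradicts the accumulated negative diagonal mass $\sum_k\langle Q_{U'_k}g_k,g_k\rangle<-\tfrac12\sum_k\rho(U'_k)$. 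A naive intersection fails because its measure can drop below $\rho(U)/2$; the plausible fix is a probabilistic or layer-cake construction of $U^*$ from a convex combination $\sum_k\alpha_k\mathds{1}_{U'_k}$, exploiting the linearity $Q_{\sum\alpha_k\mathds{1}_{U'_k}}=\sum_k\alpha_k Q_{U'_k}$ and the quadratic behavior of the HS norm to extract an indicator $\mathds{1}_{U^*}$ violating the Hilbert--Schmidt budget.
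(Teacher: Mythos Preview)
Your spectral argument for a \emph{fixed} admissible $U'$ is exactly what the paper does: it diagonalises the self-adjoint operator $Q_{U'}$, uses the Hilbert--Schmidt bound $\sum_i\sigma_i^2\le (m+1)\rho(U')^2/4$ to see that at most $m$ eigenvalues can exceed $\rho(U')/2$ in absolute value, and lets $V_m$ be the span of the corresponding eigenvectors. (The paper's ordering $|\sigma_1|\le|\sigma_2|\le\ldots$ is a slip; read it as decreasing.) So the first paragraph of your proposal already reproduces the paper's proof.

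Where you diverge is in insisting on a subspace $V_m$ independent of $U'$. The paper does not prove this: in its proof $V_m$ is built from the eigenvectors of $Q_{U'}$ and manifestly depends on $U'$, so the claim as literally stated is stronger than what is established. This causes no trouble downstream, because in the only place the claim is invoked (Claim~\ref{local behaviour of g}) the set $U'=U_t$ is chosen \emph{first}, and only then are the coefficients $a_k(t)$ selected so that $h=\sum_k a_k(t)g_{kt}$ is orthogonal to $V_m=V_m(U_t)$. Since the $a_k(t)$ are already allowed to depend on $t$, a $t$-dependent $V_m$ is perfectly adequate.

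Your greedy scheme for producing a uniform $V_m$ is therefore unnecessary, and as you yourself note the termination step is not closed: combining different witness sets $U'_k$ into a single admissible $U^*$ so as to contradict the HS budget is genuinely delicate, and there is no indication (nor need) in the paper that such a uniform choice exists. Drop that part and simply record the per-$U'$ construction.
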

\begin{proof}
	Indeed, let $|\sigma_1|\leq|\sigma_2|\leq ...$ be the eigenvalues of the operator $Q_{U'}$ defined in \eqref{operator} with $U'$ instead of $U$. Then we take $V_m$ to be the subspace generated by the eigenvectors with eigenvalues $\sigma_1,...,\sigma_m$. We are now going to show that $V_m$ has the claimed property.  By definition of $m$, we have  
	$$ \sum_i |\sigma_i|^2= ||Q_{U'}||^2 \leq  \sum_{i\neq j}|q_{ij}|^2 = \sum_{n_i\neq n_j} \left| \widehat{ \mathds{1}}_{U'}(n_i-n_j)\right|^2\leq \frac{\rho(U')^2(m+1)}{4}.$$
	Thus,
	$$ |\sigma_{m+1}|^2\leq \frac{1}{m+1}\cdot \frac{\rho(U')^2(m+1)}{4}\leq \frac{\rho(U')^2}{4}.$$
	Therefore Claim \ref{subspace claim} follows from that fact that the norm of $Q_{U'}$ restricted to   $L^2(\mathbb{T})\backslash V_m$ is at most $ |\sigma_{m+1}|\leq \rho(U')/2$ and an analogous argument to Claim \ref{trivial range}.  
\end{proof}
Now, if $V$ is finite we let $N= |V|$, if $V$ is infinite we can ignore the dependence on $N$ in the rest of the argument. With this notation, we claim the following:
\begin{claim}
	\label{local behaviour of g}
	Let $U\subset \mathbb{T}$ be a measurable set, $m$ be as in Lemma \ref{Spreading lemma} and, if $V$ is finite, suppose that $m<N$, moreover let $g\in L^2(\mathbb{T})$ with $\Spec(g)\subset V$. Then there exists some $\sigma \in (0,1)$ such that $g\in EP^m_{\text{loc}}(\tau, \varkappa)$ where $\varkappa^2=\frac{4}{\rho(U)} (m+1)\int_U |g(x)|^2dx$, $\tau= \sigma/2m$ and, moreover $\nu:= \rho (e(m\tau)U \backslash U)\geq \rho(U)/2m$. 
\end{claim}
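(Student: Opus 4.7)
The plan is to construct, for each $t$ in a suitable small interval $(0,\tau)$, a nontrivial linear combination of translates $g_{kt}$ whose $L^2$-norm on a large subset $U'_t \subset U$ is controlled by $\int_U|g|^2$, and then to invoke Claim \ref{subspace claim} to upgrade this local bound to a global $L^2$ bound on the combination. The extraction of coefficients will come from a dimension count against the subspace $V_m \subset L^2(\mathbb{T})$ furnished by Claim \ref{subspace claim}, while the choice of $\tau$ will be made via a continuity/mean-value argument on $\psi(s):=\rho(e(s)U\backslash U)$.

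First, let $V_m$ be the subspace from Claim \ref{subspace claim}, of dimension at most $m$, and let $\pi_{V_m}$ denote the orthogonal projection onto it. For any $t>0$, the $m+1$ vectors $\pi_{V_m}(g_{kt})$ with $k=0,\ldots,m$ lie in an $m$-dimensional space, hence are linearly dependent: there exist coefficients $a_0(t),\ldots,a_m(t)\in\mathbb{C}$ with $\sum_k |a_k(t)|^2=1$ and $\sum_k a_k(t)\pi_{V_m}(g_{kt})=0$. The combination $h_t:=\sum_k a_k(t) g_{kt}$ is then orthogonal to $V_m$ and still has spectrum contained in $V$; the hypothesis $m<N$ ensures this construction is not degenerate.

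To pick $\sigma$, I would analyze the continuous function $\psi(s):=\rho(e(s)U\backslash U)$, which satisfies $\psi(0)=0$ and, by Fubini, $\int_0^1\psi(s)\,ds=\rho(U)(1-\rho(U))$. Using the hypothesis of Lemma \ref{Spreading lemma} keeping $\rho(U)$ bounded away from $1$, I would set
\begin{equation*}
\sigma^\star := \sup\bigl\{\sigma\in[0,1] : \psi(s)\leq \rho(U)/(2m)\ \text{for all}\ s\in[0,\sigma/2]\bigr\},
\end{equation*}
so that continuity of $\psi$ produces both $\psi(\sigma^\star/2)=\rho(U)/(2m)$, which is exactly the desired lower bound $\nu\geq\rho(U)/(2m)$, and the uniform estimate $\psi(kt)\leq \rho(U)/(2m)$ for all $t\in(0,\tau)$ and $k\leq m$, with $\tau:=\sigma^\star/(2m)$. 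Defining $U'_t:=\bigcap_{k=0}^m e(-kt)U$, a union bound then gives $\rho(U'_t)\geq \rho(U)-\sum_{k=1}^m\psi(kt)\geq \rho(U)/2$, so $U'_t$ is a valid test set for Claim \ref{subspace claim}.

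Finally, by pointwise Cauchy--Schwarz (using $\sum_k|a_k(t)|^2=1$) and the change of variables $y=e(kt)x$, together with the inclusion $e(kt)U'_t\subset U$, one obtains
\begin{equation*}
\int_{U'_t}|h_t(x)|^2 dx \leq \sum_{k=0}^m\int_{e(kt)U'_t}|g(y)|^2\,dy \leq (m+1)\int_U |g(y)|^2\,dy.
\end{equation*}
Applying Claim \ref{subspace claim} to $h_t$ with set $U'_t$ yields $\|h_t\|_{L^2}^2\leq (2/\rho(U'_t))\int_{U'_t}|h_t|^2 \leq 4(m+1)\rho(U)^{-1}\int_U|g|^2 = \varkappa^2$, establishing $g\in EP^m_{\mathrm{loc}}(\tau,\varkappa)$. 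The main obstacle is the delicate choice of $\sigma$: one must simultaneously secure the \emph{lower} bound $\psi(m\tau)\geq \rho(U)/(2m)$ on the translated part \emph{and} the uniform \emph{upper} bound $\psi(kt)\leq \rho(U)/(2m)$ for $k\leq m$ and $t<\tau$, and to verify that such $\sigma\in[0,1]$ actually exists under the standing hypothesis on $\rho(U)$. The remaining steps are routine estimates.
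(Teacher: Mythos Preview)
Your proposal is correct and follows essentially the same route as the paper: the dimension count against $V_m$ to produce $h_t\perp V_m$, the choice $U'_t=\bigcap_{k=0}^m e(-kt)U$, the Cauchy--Schwarz bound $\int_{U'_t}|h_t|^2\leq (m+1)\int_U|g|^2$, and the application of Claim~\ref{subspace claim} are all exactly as in the paper. Your treatment of $\sigma$ via the supremum $\sigma^\star$ is slightly more explicit than the paper's (which simply invokes continuity of $\psi$ together with $\int_0^1\psi(s)\,ds\geq \rho(U)/(2m)$), but the underlying idea is identical; note that the symmetry $\psi(s)=\psi(1-s)$ guarantees the crossing point lies in $[0,1/2]$, which resolves the existence issue you flag at the end.
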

\begin{proof}
	Let $t\in [0,1)$ be given, since  exponentials with different frequencies are linearly independent\footnote{Suppose that $n_i\neq n_j$ for $i\neq j$ and $\sum_i a_{n_i}e(n_ix)=0$. Multiplying both sides by $e(-n_1x)$ and integrating for $x\in \mathbb{T}$, we see that $a_1=0$. Repeating the argument, we get $a_i=0$ for all $i$.} in $L^2(\mathbb{T})$, we can choose coefficients $a_k(t)$, so that $\sum_k |a_k|^2=1$ and the function
	$$h(\cdot)= \sum_{k=0}^m a_k(t)g_{kt}(\cdot),$$
	where $g_{kt}(x)=\sum a_{n_i}e(n_ikt)e(n_ix)$, is orthogonal to $V_m$, given in Claim \ref{subspace claim}, provided that $m< N$. Therefore, Claim \ref{subspace claim} gives 
	\begin{align}
		||h||^2_{L^2(\mathbb{T})} \leq \frac{2}{\rho(U')} \int_{U'} |h(x)|^2dx, \label{B.7}
	\end{align}
	for all $U'\subset U$ with $\rho(U')\geq \rho(U)/2$. 
	
	We are now going to choose an appropriate set $U'$ in order to estimate the r.h.s. of \eqref{B.7}. Let $t\geq 0$ and take  $U'=U_t:= \cap_{k=0}^m e(-kt)U$, 	since the function $t\rightarrow\rho( U_t \backslash U)$ is continuous and takes value $0$ at $t=0$, we can find some sufficiently small $\tau>0$ so that, for all $t\in (0,\tau)$, the set $U_t:= \cap_{k=0}^m e(-kt)U$ has measure at least $\rho(U)/2$. To estimate the r.h.s. of \eqref{B.7}, we observe that, for every $k=0,...,m$, we have  
	$$\int_{U_{t}}|g_{kt}(x)|^2dx\leq \int_{e(-kt)U} |g_{kt}(x)|^2dx = \int_U |g(x)|^2dx.$$
	Thus, the Cauchy-Schwarz inequality gives
	\begin{align} 
		\label{B.6}
		\int_{U_t}|h(x)|^2dx \leq \left(\sum_{k=0}^m  \int_{U_t}|g_{kt}(x)|^2dx \right)\leq (m+1)\int_U |g(x)|^2dx.
	\end{align}
	Hence, \eqref{B.7} together with \eqref{B.6}, bearing in mind that $\rho(U_t)\geq \rho(U)/2$, give that for all $t\in (0,\tau)$ there exists coefficients $a_1(t),...,a_m(t)$ such that $\sum_k |a_k|^2=1$ and  
	$$ \left|\left|\sum_{k=0}^m a_k(t)g_{kt}\right|\right|^2_{L^2(\mathbb{T})}\leq \frac{4(m+1)}{\rho(U)}\int_U |g(x)|^2dx.$$

	We are now left with proving the claimed estimates on $\tau$ and $\nu$. Let $\psi(s)= \rho(e(s)U \backslash U)$, bearing in mind that $\rho(U)\leq 4R(V)/(4R(V)+1)$ so that, by \eqref{B.2},  $\rho(\mathbb{T}\backslash U)\geq (4R(V)+1)^{-1}\geq (2m)^{-1}$, we have  
	$$ \int_0^1\psi(s)ds= \rho(U)\rho(\mathbb{T}\backslash U) \geq \frac{\rho(U)}{2m}.$$
	Thus, since $\psi(s)$ is non-negative and continuous, there exists some $\sigma\in (0,1)$ such that for all $s\leq \sigma$ we have $ \rho (e(s)U \backslash U) \leq \rho(U)/2m$. 
	We now verify that such $\tau=\sigma/m$ satisfies $\rho(U_t)\geq \rho(U)/2$ for all $t\in (0,\tau)$. Indeed, bearing in mind that $kt\in (0,m\tau)$, we have 
	\begin{align}
		\rho(U_t)= \rho \left( \cap_{k=0}^m e(-kt)U\right) \geq \rho(U)- \sum_{k=1}^m \rho( e(kt)U\backslash U) \geq \rho(U)- m\frac{\rho(U)}{2m}\geq \rho(U)/2,
	\end{align}
	concluding the proof of Claim \ref{local behaviour of g}. 
\end{proof} 
We are finally ready to present the proof of Lemma \ref{Spreading lemma}: 
\begin{proof}[Proof of Lemma \ref{Spreading lemma}]

	Suppose that $m< N$, then, applying Lemma \ref{main prop EPloc} with the choice of parameters given by Claim \ref{local behaviour of g}, we obtain part (1) of Lemma \ref{Spreading lemma}. For part (2), Lemma \ref{main prop EPloc} gives
	\begin{align}
		\int_{U_1}|g(x)|^2dx &\leq \left(\frac{Cm^5}{\rho(U)^2}\right)^{2m} \left(\frac{4(m+1)}{\rho(U)}+1\right)\int_U |g(x)|^2dx \nonumber \\
		& \leq \left(\frac{Cm^5}{\rho(U)^2}\right)^{3m}\int_U |g(x)|^2dx, \label{B.10}
	\end{align}
	as required. 
	
	Let us now suppose that $m\geq N$, then the Nazarov-Tur\'{a}n Lemma \cite[Theorem 1]{N93}, for any set $U_1\subset \mathbb{T}$ of measure $\rho(U_1)= \rho(U) + \rho(U)/4m$, gives  
	\begin{align}
		\int_{U_1}|g(x)|^2dx &\leq  \left(\frac{C\rho(U_1)}{\rho(U)}\right)^{N-1} \int_U |g(x)|^2dx \nonumber \\
		&\leq \left(C+ \frac{C}{4m}\right)^{N-1} \int_U |g(x)|^2dx \nonumber \\
			&\leq 100C^{m} \int_U |g(x)|^2dx \nonumber
	\end{align}
	and \eqref{B.10} follows, up noticing that $\rho(U)\leq 1$. 
\end{proof}

		\bibliographystyle{siam}
	\bibliography{Logtoral}
\end{document}